\documentclass [12pt]{article}

\usepackage{a4wide}

\usepackage[utf8]{inputenc}
\usepackage[T1]{fontenc}         
\usepackage[english, french, english]{babel}       
\usepackage{textcomp}            
\usepackage{graphicx}
\usepackage{listing}

\usepackage{amsthm}
\usepackage{amsmath}
\usepackage{amssymb}
\usepackage{amsfonts}
\usepackage{mathrsfs}
\usepackage{dsfont}
\usepackage{mathabx}
\usepackage{envmath}
\usepackage{cases}
\usepackage{comment}

\usepackage{epic, ecltree}
\usepackage{epsfig}
\usepackage{graphics}
\usepackage{color}

\usepackage{pgf,tikz,pgfplots} 
\usepackage{mathrsfs}
\usetikzlibrary{arrows}

\usepackage{fancyhdr}

\usepackage{lmodern}
\DeclareFontFamily{OMX}{lmex}{}
\DeclareFontShape{OMX}{lmex}{m}{n}{<->lmex10}{}

\usepackage[colorlinks=true, linkcolor=black, urlcolor=blue, filecolor=black, citecolor=black, menucolor=black]{hyperref} 

\usepackage{enumitem}

\newcommand{\ensemblenombre}[1]{\mathbb{#1}}
\newcommand{\N}{\ensemblenombre{N}}

\newcommand{\C}{\ensemblenombre{C}}

\newcommand{\D}{\ensemblenombre{D}}

\newcommand{\Hol}{\operatorname{Hol}}

\newcommand{\dist}{\operatorname{dist}}
\newcommand{\eps}{\varepsilon}

\newcommand{\Ell}{\operatorname{Ell}}

\makeatletter
\newcommand{\bigint}{\@ifnextchar_\@bigintsub\@bigintnosub}
\def\@bigintsub_#1{\def\@int@subscript{#1}\@ifnextchar^\@bigintsubsup\@bigintsubnosup}
\def\@bigintsubsup^#1{\mathop{\text{\large$\int_{\text{\normalsize$\scriptstyle\@int@subscript$}}^{\text{\normalsize$\scriptstyle#1$}}$}}\nolimits}
\def\@bigintsubnosup{\mathop{\text{\large$\int_{\text{\normalsize$\scriptstyle\@int@subscript$}}$}}\nolimits}
\def\@bigintnosub{\@ifnextchar^\@bigintnosubsup\@bigintnosubnosup}
\def\@bigintnosubsup^#1{\mathop{\text{\large$\int^{\text{\normalsize$\scriptstyle#1$}}$}}\nolimits}
\def\@bigintnosubnosup{\mathop{\text{\large$\int$}}\nolimits}
\makeatother

\newtheorem{thm}{Theorem}[section]
\newtheorem*{thm*}{Theorem}
\newtheorem{prop}[thm]{Proposition}

\newtheorem{lem}[thm]{Lemma}

\theoremstyle{definition}

\newcommand\blfootnote[1]{%
  \begingroup
  \renewcommand\thefootnote{}\footnote{#1}%
  \addtocounter{footnote}{-1}%
  \endgroup
}

\title{Identifying Bergman space functions from intervals}
\author{Andreas \bsc{Hartmann}\footnote{Université de Bordeaux, CNRS, Bordeaux INP, IMB, UMR 5251, 351 Cours de la Libération, F-33400, Talence, France, andreas.hartmann@math.u-bordeaux.fr}\, , Marcu-Antone \bsc{Orsoni}\footnote{Université de Laval, Pavillon Alexandre-Vachon, 1045 Avenue de la Médecine, Québec, QC, G1V 0A6, Canada, marcu-antone.orsoni@mat.ulaval.ca}}

\date{\today}

\usepackage{remreset}
\makeatletter \@removefromreset{figure}{subsection}\makeatother
\makeatletter \@removefromreset{figure}{section}\makeatother

\begin{document}

\maketitle

\begin{abstract}
We characterize functions of a Bergman space on a square by their values and derivatives on 
the diagonals. This problem is connected with the reachable space of the one-dimensional heat equation on 
a finite interval with boundary $L^2$-controls.
\end{abstract}

\blfootnote{{\it Keywords}: Bergman spaces, sampling, reachable sets}

\blfootnote{{\it 2020 Mathematics Subject classification}: 30H20, 30B40, 35K05, 93B03}

\blfootnote{This project is partially supported by ANR-24-CE40-5470}

\section{Introduction}
A central question in sampling and interpolation theory is to reconstruct a function in a space of holomorphic functions from partial information. This information can be given by the values of the function on a sequence of points or on a more general set. In case the information is given on separated points, the problem gives rise to interpolating and sampling sequences. Such sequences have been extensively investigated in many classical spaces of holomorphic functions, such as Bergman, Fock, Dirichlet, Hardy spaces etc., see \cite{Seip} for a general reference on this. Dominating sets, for which the function is known on a measurable set, and more generally reverse Carleson measures, have also been studied in such spaces, see for instance the survey \cite{FHR} and references therein. In order to have stable reconstruction, like sampling sequences, dominating sets, or reverse Carleson measures, one in general needs some uniform distribution of the set (related to some densities) in the common domain of definition. In order to get rid of the requirement of uniform distribution one can increase the available information in a point by considering higher order derivatives. In the case of sampling and interpolation in the Fock space, this was considered for instance by Seip when the multiplicities at the sampling nodes are uniformly bounded  \cite{SeI,SW}, and later in \cite{BHKM} for unbounded multiplicities, see also \cite{ACHK} for related results in the Bergman space. 

In general, when considering sampling, one has to assume {\it a priori} that the function is in the space under consideration. In this paper we follow a different path which can maybe be traced back to work by Luecking, and in particular to \cite{Lu} where the author considers integrals of higher order derivatives in connection with dominating sets. Yet, our main motivation is a paper by Aikawa-Hayashi-Saitoh who considered the Bergman space on the sector $\Delta=\{z=x+iy:x>0,|y|<x\}$.  
In their work, the authors established the following representation formula relating the Bergman norm on $\Delta$ to an integral formula on $(0,\infty)$ involving all derivatives of a function: 
\begin{equation}\label{AHS}
\int_{\Delta}|f(z)|^2dA(z)=\sum_{n=0}^{\infty}\frac{2^n}{(2n+1)!}\int_0^{\infty}
 x^{2n+1}|f^{(n)}(x)|^2dx. \tag{AHS}
\end{equation}
As a result, it follows that for a $\mathcal{C}^\infty(0, \infty)$-function 
on $(0,\infty)$ it can be decided whether it extends holomorphically to a Bergman space function in $\Delta$, and its norm is given by the formula (AHS). 
Remarkably, the usual {\it a priori} assumption in sampling that the function is in the space is not required. 

The result of \cite{AHS} can be reformulated in terms of a characterization of reachable states of the heat equation on the half-line $(0,\infty)$ controlled at 0 by weighted $L^2$-functions. As a consequence of \cite{AHS}, these reachable states are exactly functions in a suitably weighted Bergman space on $\Delta$. A natural question which arises now is whether an analogous result holds on a finite interval. This is related to a long-standing problem in control theory  about the characterization of the reachable states of the $1d$-heat equation on a finite interval $I$ with boundary $L^2$-control, a  question that goes back to the seminal work by Fattorini and Russell in the 70's \cite{FR} who established that functions holomorphic on a large enough horizontal strip centered on the interval are reachable. It was only in the 2010s that significant progress was made by Martin, Rosier and Rouchon \cite{MRR}, proving that reachable states extend holomorphically from the interval to a square $D$, one diagonal of which is $I$. They also showed that functions holomorphic in a suitable disk containing $D$ are reachable. Subsequently, Dard\'e and Ervedoza \cite{DE} proved that functions holomorphic in any neighborhood of $\overline{D}$ are reachable. It then became clear that spaces of holomorphic functions have to come into play. Some years later, in \cite{HKT}, it was shown that the reachable space is sandwiched between a Smirnov space and a Bergman space. Another important step was achieved in \cite{O}, where the second author identified the reachable space as a sum of unweighted Bergman spaces defined on sectors the intersection of which is exactly $D$. We refer also to \cite{KNT} for related work with weighted Bergman spaces. The final step was obtained in our work \cite{HO} where we solved a separation of singularities problem showing that the reachable space is exactly the Bergman space on $D$ as conjectured in \cite{HKT}.

Still, a central question remains open: as in the formula (AHS) on $(0,\infty)$, for a function defined on the interval only (with all its derivatives) can we decide whether it is in the Bergman space on $D$?\\

In this paper, in order to discuss the above question, we will first give an alternative proof of the representation formula by Aikawa, Hayashi and Saitoh. While the proof in \cite{AHS} heavily builds on the heat equation and reproducing kernels, ours is essentially based on Parseval's formula. Indeed, taking a closer look at the integral formula they establish on $(0,\infty)$, one notices that the Parseval formula is lurking behind the scenes which allows one to relate the information on $|(x/\sqrt{2})^nf^{(n)}(x)|^2$ for a fixed $x>0$ to a weighted Bergman space norm on a disk $D_x=D(x,x/\sqrt{2})$ and obviously $\Delta=\bigcup_{x>0}D_x$.
\\

Our second aim is to adapt this observation to a finite interval, which we may assume to be $(0,1)$. We mention that the 
separation of singularities result established in \cite{HO}, which shows that $A^2(D)$ decomposes as the sum $A^2(\Delta)+A^2(1-\Delta)$, will not be of great help since we do not have explicit expressions for the decomposition $f=f_1+f_2$, where $f_1$ is in the Bergman space on $\Delta$ and $f_2$ on $1-\Delta$. One feature of our new proof of the (AHS)-formula is that it adapts to the new setting on the interval. However, a major obstacle occurs: the square $D$ cannot be covered by disks centered on one diagonal only implying that it is not possible to obtain an (AHS)-formula as such on a finite interval. Still, we will prove that such a formula holds when considering both diagonals of $D$.\\

With these observations in mind, we now list the results we will establish concerning Bergman spaces on suitable domains symmetric with respect to $(0,1)$. We will need the following notation:
$$
\Omega=\bigcup_{0<x\le 1/2}D\left(x,x/\sqrt{2}\right)\cup \bigcup_{1/2<x<1}D\left(x,(1-x)/\sqrt{2}\right).
$$

We also need to replace the right hand side of \eqref{AHS} when considering the interval $(0,1)$ instead of $(0,+\infty)$. The following formula will be called (AHS)-type formula: 
\begin{eqnarray*}
\sum_{n=0}^{\infty}\frac{2^n}{(2n+1)!} \int_0^{1/2}
 x^{2n+1}\left(|f^{(n)}(x)|^2+|f^{(n)}(1-x)|^2 \right)dx. 
\end{eqnarray*}

\begin{enumerate}
\item We will first show that the (AHS)-type formula can be bounded by the (square of the) Bergman norm on $\Omega$.

\item We then show that there is a function $f$ for which this (AHS)-type formula is finite but $f$ is not in the Bergman space on $\Omega$, so that it can even not be expected that the (AHS)-type formula on $\Omega$ is comparable to the square of the Bergman norm on $\Omega$, which is a strictly smaller domain than $D$.

\item We bound the (AHS) type formula from {\it below} by the square of the norm in a Bergman space on a smaller set $\Omega_q$ depending on a certain parameter $q$ which we will introduce later.

\item Observing that the union of $\Omega_q$ and a suitably rotated version $\Omega'_q$ covers $D$, we finally show that functions in the Bergman space on $D$ can be recovered from the information on the two diagonals of $D$.
\end{enumerate}

The second item above shows that the situation on a finite interval is fundamentally different from the half-line case, and it shows that it is not possible to obtain the Bergman space on $D$ only from the information of $f$ on $(0,1)$ at least using an (AHS)-type formula. Letting $dA(z)=dxdy$ be the usual planar Lebesgue measure, our main result, which corresponds to item 4 above, reads as follows. 

\begin{thm*}
For every function $f\in A^2(D)$, we have
\begin{eqnarray}\label{MainEstim}
 \int_D |f(z)|^2dA(z)&\simeq& 
\sum_{n=0}^{\infty}\frac{2^n}{(2n+1)!} \int_0^{1/2}
 x^{2n+1}\left(|f^{(n)}(x)|^2+|f^{(n)}(1-x)|^2+ \right. \nonumber\\
 & &\quad\left. |f^{(n)}(1/2+i(1/2-x))|^2+|f^{(n)}(1/2+i(x-1/2))|^2
\right)dx. 
\end{eqnarray}
Moreover, if for a function $f$ in $\mathcal{C}^{\infty}\big((0,1)\cup (1/2-i/2,1/2+i/2)\big)$ and holomorphic in a neighborhood of $1/2$, the right hand side of \eqref{MainEstim} is finite, then $f$ extends to $A^2(D)$, and the identity \eqref{MainEstim} holds.
\end{thm*}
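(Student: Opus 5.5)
The plan is to reduce everything to a Parseval identity on individual disks and then integrate over the centers. Fix $x$ and $r>0$ with $D(x,r)$ inside the domain of holomorphy, and expand $f(x+w)=\sum_n f^{(n)}(x)w^n/n!$. Parseval on circles gives
\[
\int_{D(x,r)}|f|^2\,|z-x|^{2s}\,dA(z)=\pi\sum_{n\ge0}\frac{|f^{(n)}(x)|^2}{(n!)^2}\,\frac{r^{2n+2s+2}}{n+s+1}.
\]
With $r=x/\sqrt2$, integrate this in $x$ with a suitable weight. Fubini turns the left side into $\int |f(z)|^2 K(z)\,dA(z)$, where $K(z)$ is the weighted measure of the set of centers $x$ with $z\in D(x,x/\sqrt2)$. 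On the right, the coefficients must be chosen to reproduce $2^n x^{2n+1}/(2n+1)!$. This is exactly how (AHS) is re-proved on $\Delta$; there the weight is tuned so that $K\equiv 1$. I will take that computation, i.e.\ the paper's alternative proof of (AHS) with its explicit weight, as the model.

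\textbf{Upper bound.} For the integrals over $(0,1/2)$ near $0$, use disks $D(x,x/\sqrt2)$ with $x\le 1/2$. These lie in $\Omega\subset D$, so the above yields
\[
\sum_n\frac{2^n}{(2n+1)!}\int_0^{1/2}x^{2n+1}|f^{(n)}(x)|^2\,dx\le C\int_D|f|^2\,dA,
\]
since the kernel $K$ is bounded. The term with $f^{(n)}(1-x)$ is handled by the symmetric disks centered at $1-x$. The two terms on the vertical diagonal follow by applying the same argument to $g(z)=f(1/2+i(z-1/2))$, which rotates $D$ onto itself. This proves $\lesssim$ in \eqref{MainEstim}, which is item~1 together with its rotation.

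\textbf{Lower bound.} Here I would introduce $q\in(0,1)$ and use smaller radii $qx/\sqrt2$, with $x$ restricted to a range bounded away from the endpoints in the appropriate sense. The disks $D(x,qx/\sqrt2)$ and $D(1-x,q x/\sqrt2)$, $0<x<1/2$, have union $\Omega_q$. Because the coefficient ratio $r^{2n}$ with $r=qx/\sqrt2$ is dominated termwise by the $r=x/\sqrt2$ coefficients, the (AHS)-type sum controls $\int_{\Omega_q}|f|^2 K_q\,dA$. I then need $K_q\gtrsim 1$ on $\Omega_q$, and this is the step I expect to be the main obstacle: near the endpoint $0$ the centers covering a point shrink with it, and near $x=1/2$ the radii coming from the two sides must overlap. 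I would compute $K_q(z)$ explicitly in polar-type coordinates around $0$, as in the (AHS) computation, to check it stays bounded below. Finally I must verify geometrically that $\Omega_q\cup\Omega'_q=D$ for $q$ close to $1$, where $\Omega'_q$ is the rotated copy coming from the vertical diagonal. The points near the midpoints of the sides of $D$ are exactly those missed by $\Omega$ and must be reached by $\Omega'_q$; this pins down how close to $1$ the parameter $q$ must be. Summing the two lower bounds gives $\gtrsim$.

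\textbf{Extension statement.} Finiteness of the right side implies, for a.e.\ $x$, that $\sum_n |f^{(n)}(x)|^2 (qx/\sqrt2)^{2n}/(n!)^2<\infty$ with room to spare. Hence the Taylor series at $x$ converges on $D(x,qx/\sqrt2)$, and similarly on the other diagonal. These local extensions all agree with the germ of $f$ at $1/2$: the disks form a connected chain through $1/2$, and each local series agrees with $f$ on real (resp.\ vertical) segments by smoothness plus analyticity. This gives a holomorphic $f$ on $\Omega_q\cup\Omega'_q=D$. The lower-bound computation, applied as a Fubini estimate for nonnegative functions, then shows $f\in A^2(D)$, and the identity follows from the first part.
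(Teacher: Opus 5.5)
Your upper bound is the paper's argument (Proposition~\ref{Prop1} and its rotation), with one correction. The disk identity must use the weight $(r^2-|z-x|^2)^{-1/2}$ of Section~2. A power weight $|z-x|^{2s}$ gives the $n$-dependence $1/(n+s+1)$, which cannot match $2^{2n}(n!)^2/(2n+1)!\simeq (n+1)^{-1/2}$.

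The genuine gap is in the lower bound. Shrinking the radii to $qx/\sqrt2$ destroys the covering of $D$ for \emph{every} $q<1$, so no choice of $q$ ``close to $1$'' works. The reasons are as follows:
\begin{itemize}
\item $D(x,qx/\sqrt2)\subset\{|\arg z|<\arcsin(q/\sqrt2)\}$, and $\arcsin(q/\sqrt2)<\pi/4$.
\item The disks centered at $1-x$ lie in $\{\Re z>1/2-q/(2\sqrt2)\}$.
\item Every disk of $\Omega'_q$ stays at distance at least $1/2-q/(2\sqrt2)$ from $0$.
\end{itemize}
Hence a point $\varepsilon e^{i\phi}$ with $\varepsilon<1/2-q/(2\sqrt2)$ and $\arcsin(q/\sqrt2)\le\phi<\pi/4$ lies in $D$ but in neither $\Omega_q$ nor $\Omega'_q$. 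In fact every point of an open side of $D$ has a neighbourhood disjoint from $\Omega_q\cup\Omega'_q$. Your argument therefore yields only a lower bound by $\int_{\Omega_q\cup\Omega'_q}|f|^2\,dA$, which cannot dominate $\int_D|f|^2\,dA$. To see this, take normalized reproducing kernels of $A^2(D)$ at points tending to such a side point: their mass on $\Omega_q\cup\Omega'_q$ tends to $0$. The extension step has the same defect, but there the repair is immediate. Almost-everywhere finiteness already gives radius of convergence at least $x/\sqrt2$, hence holomorphy on $\Omega\cup\Omega'=D$.

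Shrinking also does not remove the obstacle you flagged at $x=1/2$. As $z$ tends to the top point $1/2+iq/(2\sqrt2)$ of the central disk, the admissible centers in $(0,1/2)$ form an interval $(x_1,1/2)$ with $x_1\to1/2$, and the same happens on $(1/2,1)$. So $K_q(z)\to0$, and $K_q\gtrsim1$ fails on your $\Omega_q$.

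The missing idea is the paper's construction:
\begin{itemize}
\item Keep the full radii $x/\sqrt2$. Then $\Omega$ is exactly the two end triangles $D\cap\{\Re z<1/4\}$ and $D\cap\{\Re z>3/4\}$ plus the inscribed disk $D(1/2,1/(2\sqrt2))$, so $\Omega\cup\Omega'=D$.
\item Remove only the top and bottom caps of the inscribed disk, where the truncated kernel degenerates. Do this by intersecting with the ellipse $\Ell_{q,0}$, $q\in(0,8)$, which passes through the side midpoints $1/4\pm i/4$ and $3/4\pm i/4$.
\item On the resulting $\Omega_q$ the truncated kernel is bounded below. The delicate points $1/4\pm i/4$ need the short expansion $2(a^2-b(a)^2)=(a-1/4)R(a)$.
\item $\Omega_q\cup\Omega'_q$ still equals $D$. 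The corner triangles $D\cap\{|\Re z-1/2|>1/4\}$ and $D\cap\{|\Im z|>1/4\}$ lie in $\Omega_q$ and $\Omega'_q$ respectively. Any remaining point $a+ib$ of $D$ has $|a-1/2|,|b|\le1/4$, so $u+v<1/8$ for $u=(a-1/2)^2$, $v=b^2$. Since $qu+(16-q)v$ and $(16-q)u+qv$ sum to $16(u+v)<2$, one of them is $<1$, and the point lies in $\Ell_{q,0}$ or its rotation.
\end{itemize}
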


Note that we need to impose some local analyticity condition in $1/2$ to relate the derivative in the horizontal direction with the derivative in the vertical direction of $f$ at the intersection point of the diagonals.\\

The theorem says in particular that a function can be recovered from the two diagonals. In other words, it can be seen from the behavior on the diagonals of a square whether a function in $\mathcal{C}^{\infty}$ on these diagonals and analytic in a neighborhood of their intersection extends holomorphically to a square integrable function on the whole square, and the preceding remarks show that we cannot do better.

Obviously, since a function of a Bergman space on a disk can be recovered from its value and derivatives at the center of the disk, we have a sufficient condition to be in $A^2(D)$ when such a disk contains $D$ with no need to appeal to a weighted integration of these derivatives.
\\ 

The paper is organized as follows. In the next section we will present our proof of the (AHS)-formula. In Section 3 we will discuss the partial results 1.-3. presented above. The final Section 4 is devoted to the proof of the theorem stated above.
\\

\section{The Aikawa-Hayashi-Saitoh result}
For convenience, let us recall the Aikawa-Hayashi-Saitoh result.
\begin{thm*}\label{ThmAHS}
For every function $f\in A^2(\Delta)$, we have
\begin{equation}
\int_{\Delta}|f(z)|^2dA(z)=\sum_{n=0}^{\infty}\frac{2^n}{(2n+1)!}\int_0^{\infty}
 x^{2n+1}|f^{(n)}(x)|^2dx. \tag{AHS}
\end{equation}
Moreover, if the right hand side of (AHS) is bounded for a function $f\in \mathcal{C}^{\infty}(0,\infty)$, then $f$ extends analytically to a function in $A^2(\Delta)$, and the identity (AHS) holds.
\end{thm*}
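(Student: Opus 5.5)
The plan is to obtain (AHS) by integrating a weighted Parseval identity on the disks $D_x=D(x,x/\sqrt2)$, $x>0$, over $x$, and then to recognise the result as the Bergman norm on $\Delta=\bigcup_{x>0}D_x$.

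\textbf{Step 1: weighted Parseval on one disk.} Fix $x>0$ and write $f(x+w)=\sum_n a_n(x)w^n$ with $a_n(x)=f^{(n)}(x)/n!$. For a radial weight of the form $x^{-2}\varphi(|w|/x)$ supported in $|w|<x/\sqrt2$, Parseval's formula gives
\[
\int_{D_x}|f(z)|^2\,x^{-2}\varphi\!\left(\tfrac{|z-x|}{x}\right)dA(z)=2\pi\sum_{n\ge0}|a_n(x)|^2x^{2n}\int_0^{1/\sqrt2}t^{2n+1}\varphi(t)\,dt .
\]
We want the $n$-th term to equal $\frac{2^n}{(2n+1)!}x^{2n}|f^{(n)}(x)|^2$, which becomes $\frac{2^n}{(2n+1)!}x^{2n+1}|f^{(n)}(x)|^2$ after multiplying by the factor $x$ coming from the measure $dx$ in Step 2. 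This forces the moment conditions
\[
2\pi\int_0^{1/\sqrt2}t^{2n+1}\varphi(t)\,dt=\frac{2^n(n!)^2}{(2n+1)!}=2^n B(n+1,n+1),\qquad n\ge0 .
\]
Substituting $s=2t^2$ turns this into a Hausdorff moment problem on $[0,1]$ for the moments $m_n=\tfrac{\pi}{2^{n}}\cdot 2^nB(n+1,n+1)\cdot\frac{1}{\pi}$, up to fixed constants. Since $B(n+1,n+1)=\int_0^1u^n(1-u)^n\,du$, I expect a closed form for $\varphi$ to come from an explicit density of Beta/arcsine type. If no closed form appears, complete monotonicity of the sequence still yields a positive measure, which suffices.

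\textbf{Step 2: integrate in $x$ and apply Fubini.} Multiply the identity of Step 1 by $x$ and integrate over $x\in(0,\infty)$. The right-hand side becomes the right-hand side of (AHS). The left-hand side becomes $\int_\Delta|f(z)|^2K(z)\,dA(z)$, where
\[
K(z)=\int_0^\infty x^{-1}\varphi\!\left(\tfrac{|z-x|}{x}\right)dx .
\]
This kernel is homogeneous of degree $0$, so it depends only on $\arg z$. \textbf{The main obstacle is showing $K\equiv1$ on $\Delta$.} I would first try a direct computation in polar coordinates using the explicit $\varphi$. If that is messy, the fallback is to test the identity on the family $f(z)=z^{-\alpha}$ and use Mellin transforms. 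The moment conditions determine the right-hand side for all such $f$, so comparing both sides over the family should force $K$ to have constant angular profile. Homogeneity then gives $K\equiv$ const, and $f=1$ on a truncated region fixes the constant. Both sides are positive, so Tonelli applies without any integrability hypothesis, and the identity holds for every $f\in A^2(\Delta)$.

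\textbf{Step 3: the converse.} Suppose the right-hand side of (AHS) is finite for some $f\in\mathcal C^\infty(0,\infty)$. Then for a.e.\ $x$ the sum $\sum_n|a_n(x)|^2x^{2n}\int_0^{1/\sqrt2}t^{2n+1}\varphi(t)\,dt$ is finite. Since $\varphi$ charges every neighbourhood of $1/\sqrt2$, this forces $\limsup|a_n(x)|^{1/n}\le\sqrt2/x$. Hence the Taylor series at $x$ defines a holomorphic $F_x$ on $D_x$.

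To glue the $F_x$, note that for nearby $x,x'$ the disks overlap along a real segment on which both series equal $f$. I need that the Taylor series actually reproduces $f$ on the real interval, not merely that it converges there. I would get this from the weighted $L^2$ control of all derivatives, via Taylor's formula with integral remainder, applied at a.e.\ point and then extended by continuity. Once this is in place, the identity principle glues the $F_x$ into one holomorphic function on $\Delta$, and Step 2 shows that this function lies in $A^2(\Delta)$ and satisfies (AHS).
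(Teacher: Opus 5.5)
\textbf{Genuine gap: the weight $\varphi$ is never found and $K\equiv1$ is never proved.} Your architecture is the paper's: weighted Parseval on $D_x=D(x,x/\sqrt2)$, multiplication by $x$, Tonelli in $x$, and for the converse Taylor's formula with integral remainder plus gluing of local Taylor series. Your Step 3 is fine and matches the paper's lemma. But the two computations that make up the proof are missing. You never determine $\varphi$, and you leave $K\equiv1$ open as ``the main obstacle''.

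\textbf{The moment bookkeeping is off.} With $\psi(s)=\varphi(\sqrt{s/2})$ one has
\[
2\pi\int_0^{1/\sqrt2}t^{2n+1}\varphi(t)\,dt=\frac{\pi}{2^{n+1}}\int_0^1 s^n\psi(s)\,ds .
\]
So the required moments are $\int_0^1 s^n\psi(s)\,ds=\frac{2}{\pi}4^nB(n+1,n+1)$, not a fixed multiple of $B(n+1,n+1)$. Your version would produce a density supported in $s\in[0,1/4]$, i.e.\ a weight living on the wrong disk (radius $x/(2\sqrt2)$).

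\textbf{Your Beta idea works once corrected.} Write $B(n+1,n+1)=\int_0^1(u(1-u))^n\,du$ and substitute $v=4u(1-u)$. This gives $4^nB(n+1,n+1)=\frac12\int_0^1 v^n(1-v)^{-1/2}\,dv$. Equivalently, $\frac{2^{2n}(n!)^2}{(2n+1)!}=\frac{\Gamma(n+1)\Gamma(3/2)}{\Gamma(n+3/2)}$ is $\|u^n\|^2$ in $A^2_{-1/2}(\D)$, which is how the paper sees it. Hence $\varphi(t)=\pi^{-1}(1-2t^2)^{-1/2}$. With this weight the obstacle disappears by direct computation. For $x\in(x_1,x_2)$ (and $0$ otherwise),
\[
x^{-1}\varphi\Bigl(\frac{|z-x|}{x}\Bigr)=\frac{1}{\pi\sqrt{x^2-2|z-x|^2}}=\frac{1}{\pi\sqrt{(x-x_1)(x_2-x)}},\qquad x_{1,2}=2\Re z\pm\sqrt{2\bigl((\Re z)^2-(\Im z)^2\bigr)}.
\]
Therefore $K(z)=\frac1\pi\int_{x_1}^{x_2}\frac{dx}{\sqrt{(x-x_1)(x_2-x)}}=1$ for every $z\in\Delta$. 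This arcsine integral, whose value does not depend on $z$, is the heart of the paper's proof and is what your proposal lacks.

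\textbf{The Mellin fallback does not close the gap as written.} The functions $z^{-\alpha}$ and $1$ are not in $A^2(\Delta)$, and both sides of (AHS) diverge for them. So ``testing the identity'' on them is circular unless you localize in scale. Such a localization exists. For $f(z)=z^{-1+i\tau}$, restricting $x$ to $(a,b)$ and using homogeneity makes both sides equal to $\log(b/a)$ times, respectively, $\int_{-\pi/4}^{\pi/4}e^{-2\tau\theta}K(\theta)\,d\theta$ and $S(\tau)=\sum_n\frac{2^n}{(2n+1)!}\prod_{k=1}^n(k^2+\tau^2)$. But concluding $K\equiv1$ then requires the independent evaluation $S(\tau)=\sinh(\pi\tau/2)/\tau$. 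One can get it from the expansion of $\sin(c\arcsin x)/(c\sqrt{1-x^2})$ with $c=2i\tau$ at $x=1/\sqrt2$. That is a computation of the same depth as the one you skipped, and it is not in your proposal.

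\textbf{Minor point on Step 3.} The bound $\limsup|a_n(x)|^{1/n}\le\sqrt2/x$ follows directly from $\frac{2^{2n}(n!)^2}{(2n+1)!}\simeq(n+1)^{-1/2}$. You need no information about the support of $\varphi$.
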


We will now present the alternative proof of this result.

\begin{proof}
As already observed we can cover $\Delta$ by disks
$$
 \Delta=\bigcup_{x>0} D_x,\quad D_x=D\left(x,\frac{x}{\sqrt{2}}\right)=\{z\in\C:|z-x|<\frac{x}{\sqrt{2}}\}.
$$
In order to make the connection with the Parseval formula on $\partial D_x$ mentioned earlier, we write
\begin{eqnarray*}
 \varphi: \D&\longrightarrow &D_x\\
  u&\longmapsto& z=\frac{x}{\sqrt{2}}u+x,
\end{eqnarray*}
and
$$
 g(u)=f\circ\varphi(u)=f\left(\frac{x}{\sqrt{2}}u+x\right).
$$
Starting from a function in $A^2(\Delta)$ or a function in $\mathcal{C}^{\infty}(0,+\infty)$, we can write
\begin{eqnarray}\label{AHS0}
\sum_{n=0}^{\infty}\frac{2^n}{(2n+1)!}\int_0^{\infty}
 x^{2n+1}|f^{(n)}(x)|^2dx
&=&\int_0^{\infty}\sum_{n=0}^{\infty}\left| \frac{(x/\sqrt{2})^{n}f^{(n)}(x)}{n!} \right|^2
\frac{2^{2n}\,(n!)^{2}}{(2n+1)!}xdx
\end{eqnarray}
(which could be infinite {\textit{a priori}}).

Note that for all $n \in \N$,
\begin{equation}\label{Stirling}
\frac{2^{2n}\,(n!)^{2}}{(2n+1)!}= \frac{\Gamma(1+n)\,\Gamma\!\left(2-\frac{1}{2}\right)}{\Gamma\!\left(n+2-\frac{1}{2}\right)}.
\end{equation}
Indeed, using the fundamental property 
$$\Gamma(z+1)=z\Gamma(z), \quad \Re(z)>0, $$
and the known value $\Gamma\left(\frac{1}{2}\right)=\sqrt{\pi}$,  
we get by induction the value of $\Gamma$ on half-integers
$$\forall n \in \mathbb{N}, \ \Gamma\left(n+\frac{1}{2}\right)=
 \frac{(2n)!}{2^{2n}n!}\,\sqrt{\pi},
$$
from which we deduce \eqref{Stirling}. 
For later purposes we also recall the following asymptotic behavior using  Stirling's formula (see for instance \cite[p.8]{HKZ}):
\begin{equation}\label{Asymp}
\frac{2^{2n}\,(n!)^{2}}{(2n+1)!}= \frac{\Gamma(1+n)\,\Gamma\!\left(2-\frac{1}{2}\right)}{\Gamma\!\left(n+2-\frac{1}{2}\right)}
\simeq \frac{1}{\sqrt{n+1}}.
\end{equation}

Now, since $g(u)=f(\frac{x}{\sqrt{2}}u+x)$, we obtain for all $n \in \N$,
$$
 g^{(n)}(0)=\left(\frac{x}{\sqrt{2}}\right)^n f^{(n)}(x),
$$
and
$$
 \hat{g}(n):=\frac{g^{(n)}(0)}{n!}=\left(\frac{x}{\sqrt{2}}\right)^n \frac{f^{(n)}(x)}{n!}.
$$
Hence
\begin{eqnarray*}
\sum_{n=0}^{\infty}\frac{2^n}{(2n+1)!}\int_0^{\infty}
 x^{2n+1}|f^{(n)}(x)|^2dx
&=&\int_0^{\infty}\sum_{n=0}^{\infty}\left| \frac{(x/\sqrt{2})^{n}f^{(n)}(x)}{n!} \right|^2
\frac{2^n}{(2n+1)!}\times 2^n(n!)^2xdx\\
&=&\int_0^{\infty}\sum_{n=0}^{\infty}|\hat{g}(n)|^2 \frac{\Gamma(1+n)\,\Gamma\!\left(2-\frac{1}{2}\right)}{\Gamma\!\left(n+2-\frac{1}{2}\right)} xdx.
\end{eqnarray*}
Here we recognize the norm of a weighted Bergman space in the unit disk (see for instance \cite[page 4]{HKZ}):
$$
 \sum_{n=0}^{\infty}|\hat{g}(n)|^2 \frac{\Gamma(1+n)\,\Gamma\!\left(2-\frac{1}{2}\right)}{\Gamma\!\left(n+2-\frac{1}{2}\right)}= \int_{\D}|g(u)|^2(1-|u|^2)^{-1/2} \frac{dA(u)}{2\pi}
=:\|g\|_{A_{-\frac{1}{2}}^2}^2.
$$

Changing variables back, $u=\frac{\sqrt{2}}{x}(z-x)$,
\begin{eqnarray*}
 \int_{\D}|g(u)|^2(1-|u|^2)^{-1/2}\frac{dA(u)}{2\pi}&=&\int_{\D}|f\Big(\frac{x}{\sqrt{2}}u+x\Big)|^2
 (1-|u|^2)^{-1/2}\frac{dA(u)}{2\pi}\\
 &=&\int_{D_x}|f(z)|^2 \left(\frac{\sqrt{2}}{x}\right)^2
 (1-|\frac{\sqrt{2}}{x}(z-x)|^2)^{-1/2}\frac{dA(z)}{2\pi}\\
 &=&\int_{D_x}|f(z)|^2 \frac{\sqrt{2}}{x}
 \left(\frac{x^2}{{2}}-|z-x|^2\right)^{-1/2}\frac{dA(z)}{2\pi}.
\end{eqnarray*}

Hence, with $\chi_{D_x}$ being the characteristic function of $D_x$,
\begin{eqnarray}
\lefteqn{\sum_{n=0}^{\infty}\frac{2^n}{(2n+1)!}\int_0^{\infty}
 x^{2n+1}|f^{(n)}(x)|^2dx \notag}\\
 &=& \int_0^{\infty} \int_{D_x}|f(z)|^2 \frac{\sqrt{2}}{x}
 \left(\frac{x^2}{{2}}-|z-x|^2\right)^{-1/2}\frac{dA(z)}{2\pi}xdx\nonumber\\
&=& \sqrt{2} \int_0^{\infty} \int_{D_x}|f(z)|^2  \left(\frac{x^2}{{2}}-|z-x|^2\right)^{-1/2}\frac{dA(z)}{2\pi}{dx}\nonumber\\
&=& \sqrt{2} \int_0^{\infty}\int_{\Delta}\chi_{D_x}(z)|f(z)|^2 \left(\frac{x^2}{{2}}-|z-x|^2\right)^{-1/2}\frac{dA(z)}{2\pi}{dx}\nonumber\\
&=& \sqrt{2} \int_{\Delta}|f(z)|^2\int_{0}^{\infty}\chi_{D_x}(z) \left(\frac{x^2}{{2}}-|z-x|^2\right)^{-1/2}{dx} \frac{dA(z)}{2\pi}.\label{MainFormula}
\end{eqnarray}

Now $\chi_{D_x}(z)=1$ if and only if $|z-x|^2<x^2/2$ which is equivalent to
$$
 |z|^2-2x\Re z+x^2<x^2/2.
$$ 
This can be rewritten as a second degree inequality in $x$: 
\begin{equation}\label{2ndDeg}
x^2-4x\Re z+2|z|^2<0.
\end{equation}
Setting $z=a+ib$, $a>0$ and $|b|<a$, we get \eqref{2ndDeg} exactly when $x\in (x_1,x_2)$ where
$$
x_{1/2}=\frac{4\Re z\pm 2\sqrt{2(a^2-b^2)}}{2}=2a\pm \sqrt{2(a^2-b^2)}.
$$
We thus have to estimate
$$
\int_{0}^{\infty}\chi_{D_x}(z) \left(\frac{x^2}{{2}}-|z-x|^2\right)^{-1/2}{dx}
=\int_{x_1}^{x_2}\left(\frac{x^2}{{2}}-|z-x|^2\right)^{-1/2}{dx}.
$$

Note that
\begin{eqnarray}\label{fct.h}
 h(x)&:=&\frac{x^2}{{2}}-|z-x|^2=\frac{x^2}{2}-|z|^2+2x\Re z-x^2
 =-\frac{1}{2}(x^2-4x\Re z+2|z|^2)\nonumber\\
&=&\frac{1}{2}(x-x_1)(x_2-x),
\end{eqnarray}
which is exactly the polynomial discussed previously, and one over the square-root of
which we integrate now between the roots ensuring the positivity of the polynomial.
Hence
$$
\int_{x_1}^{x_2}\left(\frac{x^2}{{2}}-|z-x|^2\right)^{-1/2}{dx}
=\sqrt{2}\int_{x_1}^{x_2}\frac{1}{\sqrt{x-x_1}}\frac{1}{\sqrt{x_2-x}}dx.
$$
Let us make the change of variables transforming $[x_1,x_2]$ to $[-1,1]$,
$$
 y=\frac{2}{x_2-x_1}(x-x_1)-1
$$
so that
$$
 x-x_1=(y+1)\frac{x_2-x_1}{2}, \quad   \quad x-x_2=(y-1)\frac{x_2-x_1}{2}.
$$
Then, noting that $dx=\frac{x_2-x_1}{2}dy$,
\begin{eqnarray*}
\int_{x_1}^{x_2}\frac{1}{\sqrt{x-x_1}}\frac{1}{\sqrt{x_2-x}}dx.
&=&\int_{-1}^1\frac{1}{\sqrt{y+1}} \frac{1}{\sqrt{1-y}}\frac{2}{x_2-x_1}\times  \frac{x_2-x_1}{2}dy\\
&=&\int_{-1}^1\frac{1}{\sqrt{1-y^2}}dy\\
 &=&\pi.
\end{eqnarray*}
Hence
\begin{eqnarray}\label{IntChi}
\int_{0}^{\infty}\chi_{D_x}(z) \left(\frac{x^2}{{2}}-|z-x|^2\right)^{-1/2}{dx}
&=&\int_{x_1}^{x_2}\left(\frac{x^2}{{2}}-|z-x|^2\right)^{-1/2}{dx}\nonumber\\
&=&\sqrt{2}\int_{x_1}^{x_2}\frac{1}{\sqrt{x-x_1}}\frac{1}{\sqrt{x_2-x}}dx\nonumber\\
&=& \sqrt{2}\pi.
\end{eqnarray}
We conclude the AHS formula:
\begin{eqnarray*}
{\sum_{n=0}^{\infty}\frac{2^n}{(2n+1)!}\int_0^{\infty}
 x^{2n+1}|f^{(n)}(x)|^2dx}
&=&\sqrt{2}\int_{\Delta}|f(z)|^2\int_{0}^{\infty}\chi_{D_x}(z) \left(\frac{x^2}{{2}}-|z-x|^2\right)^{-1/2}{dx} \frac{dA(z)}{2\pi}\\
&=&\int_{\Delta}|f(z)|^2dA(z).
\end{eqnarray*}
It follows from the above computations that when $f\in A^2(\Delta)$, the right hand side of (AHS) is finite and indeed equal to the square of the $A^2(D)$-norm of $f$.
\\

We now  prove  that if 
\begin{equation}\label{CvInt}
\sum_{n=0}^{\infty}\frac{2^n}{(2n+1)!}\int_0^{\infty}
 x^{2n+1}|f^{(n)}(x)|^2dx<\infty,
\end{equation}
for some function $f\in\mathcal{C}^{\infty}(0,\infty)$, then $f$ extends to a holomorphic function in $\Delta$ and belongs to $A^2(\Delta)$. 
We will need the following auxiliary result.
\begin{lem}\label{RealAnal}
Let $f\in\mathcal{C}^{\infty}(0,\infty)$ such that 
$$
\sum_{n=0}^{\infty}\frac{2^n}{(2n+1)!}\int_0^{\infty}
 x^{2n+1}|f^{(n)}(x)|^2dx<\infty.
$$
Then $f$ is real analytic on $(0,\infty)$.
\end{lem}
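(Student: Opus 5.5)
The plan is to turn the finiteness of the series into Cauchy-type pointwise bounds $|f^{(n)}(x)|\le C\,n^k\,n!\,\rho^{-n}$ on compact subintervals of $(0,\infty)$. Real analyticity then follows from Taylor's formula with integral remainder.

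\textbf{Step 1 (weighted $L^2$ bounds).} Denote by $S$ the finite sum in the statement. Fix $0<a<b$. Each term of the series is at most $S$, so for every $n$
$$
\int_a^b |f^{(n)}(x)|^2dx\le a^{-(2n+1)}\int_a^b x^{2n+1}|f^{(n)}(x)|^2dx\le S\,\frac{(2n+1)!}{2^n a^{2n+1}}.
$$
No a priori integrability of $f$ beyond the hypothesis is needed here, since every term is nonnegative.

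\textbf{Step 2 (pointwise bounds).} We use the elementary one-dimensional Sobolev inequality on an interval $J=[a,b]$ of length $\ell=b-a$. For $g\in\mathcal C^1(J)$ and $x,y\in J$ we have $|g(x)|\le |g(y)|+\int_J|g'|$. Averaging in $y$ and applying Cauchy--Schwarz gives
$$
\sup_J|g|\le \ell^{-1/2}\|g\|_{L^2(J)}+\ell^{1/2}\|g'\|_{L^2(J)}.
$$
Applying this to $g=f^{(n)}$ and inserting Step 1 for $n$ and $n+1$ yields
$$
\sup_{[a,b]}|f^{(n)}|\le C_{a,b}\sqrt{S}\left(\sqrt{\tfrac{(2n+1)!}{2^n a^{2n}}}+\sqrt{\tfrac{(2n+3)!}{2^{n+1} a^{2n+2}}}\right).
$$
Now use $(2n)!\simeq 4^n (n!)^2/\sqrt{n}$, which is equivalent to \eqref{Asymp}. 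This gives $\sqrt{(2n+3)!/2^{n}}\lesssim (n+1)^{2}\,n!\,2^{n/2}$, and therefore
$$
\sup_{[a,b]}|f^{(n)}|\le C\,(n+1)^{2}\,n!\,\Big(\frac{\sqrt2}{a}\Big)^{n}.
$$

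\textbf{Step 3 (convergence of the Taylor series).} Fix $x_0>0$ and work on $[a,b]=[x_0/2,2x_0]$. For $t$ in this interval with $|t-x_0|<\delta$, Taylor's formula with integral remainder gives
$$
R_N(t)=\int_{x_0}^t f^{(N+1)}(s)\frac{(t-s)^N}{N!}ds.
$$
By Step 2,
$$
|R_N(t)|\le C (N+2)^{3}\Big(\frac{\sqrt2\,|t-x_0|}{a}\Big)^{N+1}.
$$
This tends to $0$ as soon as $|t-x_0|<a/\sqrt2=x_0/(2\sqrt2)$. Hence the Taylor series of $f$ at $x_0$ converges to $f$ on a neighbourhood of $x_0$. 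Since $x_0>0$ was arbitrary, $f$ is real analytic on $(0,\infty)$.

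The main obstacle is Step 2: the hypothesis only controls $L^2$ norms, and we must pass to sup norms without losing more than a geometric factor in $n$. The Sobolev inequality costs only one extra derivative. Together with the Stirling asymptotics, this shows that the loss is merely polynomial in $n$, which is harmless for the radius of convergence.
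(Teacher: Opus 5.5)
Your proof is correct and follows the same strategy as the paper. Both arguments use Taylor's formula with integral remainder at $x_0$, control $f^{(N+1)}$ near $x_0$ by the corresponding term of the series, and invoke the Stirling asymptotics \eqref{Asymp} to get geometric decay of the remainder. The one difference is that you first turn the weighted $L^2$ bounds into sup-norm Cauchy estimates through a one-dimensional Sobolev inequality. The paper instead applies Cauchy--Schwarz directly to the remainder integral, which saves the extra derivative and gives the slightly larger radius $x_0/(1+\sqrt2)$ in place of your $x_0/(2\sqrt2)$; you could also recover that radius by taking $a$ closer to $x_0$.
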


\begin{proof}
Pick $f\in \mathcal{C}^{\infty}(0,\infty)$. Let $x_0>0$ be arbitrary, and let $\eps>0$ be fixed later. Since $f$ is $\mathcal{C}^{\infty}$ on $(0,\infty)$, for every $n\in\N$ we can expand $f$ as a Taylor series with integral remainder
$$
f(x)=\sum_{k=0}^n\frac{f^{(k)}(x_0)}{k!}
 (x-x_0)^k+\underbrace{\int_{x_0}^x\frac{(x-t)^n}{n!} f^{(n+1)}(t)dt.}_{R_n(x)}
$$
We have to show that, when $n$ tends to infinity, $R_n$ tends uniformly to zero on a sufficiently small interval $(x_0-\eps,x_0+\eps)$, $0<\eps<x_0$. Assuming first that $x_0<x<x_0+\eps$, we have
\begin{eqnarray*}
\left|\int_{x_0}^x\frac{(x-t)^n}{n!} f^{(n+1)}(t)dt\right|^2
 &\le& \int_{x_0}^x\frac{|x-t|^{2n}}{(n!)^2}dt \times \int_{x_0}^x | f^{(n+1)}(t)|^2dt\\
 &\le&\frac{\eps^{2n}}{(n!)^2(x_0-\eps)^{2n+3}}
  \times \int_{x_0}^xt^{2n+3}|f^{(n+1)}(t)|^2dt\\
&=&\frac{\eps^{2n}(2n+3)!}{(n!)^2(x_0-\eps)^{2n+3}2^{n+1}}
  \times \underbrace{\frac{2^{n+1}}{(2n+3)!} \int_{x_0}^xt^{2n+3}|f^{(n+1)}(t)|^2dt}_{\gamma_n}.
\end{eqnarray*}
(The estimates work similarly for $x_0-\eps<x<x_0$: indeed, the factor $(x_0-\eps)^{2n+3}$ in the denominator allows to take care also of this second case, since then $t\ge x_0-\eps$.)
By assumption $(\gamma_n)$ is summable, so that it tends to zero. It remains to examine the first factor, which can be written as
\begin{eqnarray*}
\frac{\eps^{2n}(2n+3)!}{(n!)^2(x_0-\eps)^{2n+3}2^{n+1}}
 &=&\left|\frac{\eps}{x_0-\eps}\right|^{2n}
  \times \frac{(2n+3)(2n+2)2^n}{2|x_0-\eps|^3}
   \times \frac{(2n+1)!}{(n!)^22^{2n}}
\end{eqnarray*}
By \eqref{Asymp}, the last factor behaves like $1/\sqrt{n+1}$. 
In order to have uniform convergence of the remainder term, it is thus sufficient to have
$$
 \left|\frac{\eps}{x_0-\eps}\right|<\sqrt{\frac{1}{2}},
$$
which holds for instance when 
$$
 0<\eps<\frac{x_0}{\sqrt{2}+1}
$$
\end{proof}

Now, the convergence of the integral in \eqref{CvInt} implies in particular that there exists a measurable set $E\subset (0,\infty)$ of full measure such that for every $x\in E$, we have
$$
\sum_{n=0}^{\infty}\frac{2^n}{(2n+1)!}
 x^{2n+1}|f^{(n)}(x)|^2<\infty.
$$
From this we deduce that
$$
\frac{|f^{(n)}(x)|}{n!}\lesssim\sqrt{\frac{(2n+1)!}{2^{2n}(n!)^2}}\times \left(\frac{\sqrt{2}}{x}\right)^n\times\frac{1}{\sqrt{x}}
$$
and hence, using \eqref{Asymp}, that 
$$
 f_{x}(z)=\sum_{n=0}^{\infty}\frac{f^{(n)}(x)}
  {n!}(z-x)^n
$$
is holomorphic in $D_x$ for $x\in E$.
Observe that if $x_1,x_2\in E$ are sufficiently close to each other, then $f_{x_1}$ and $f_{x_2}$ coincide on an intervalle with positive length so that they represent the same function $f$.
Since
$
\Delta=\bigcup_{x>0}D_x=\bigcup_{x\in E}D_x,
$
we finally obtain that $f$ extends holomorphically to $\Delta$. The (AHS)-formula then shows that $f$ is square-integrable on $\Delta$ and hence $f\in A^2(D)$.
\end{proof}

We mention that $f$ could {\it a priori} be the restriction to $(0,\infty)$ of a non analytic function, like for instance an anti-analytic or a sum of an analytic and an anti-analytic function. The theorem states that \eqref{AHS} holds for the analytic extension of $f$.

\section{The case of one diagonal}
In view of the discussions in the previous section, let us recall the following domain
$$
\Omega=\bigcup_{0<x<1/2}D_x\cup \bigcup_{1/2<x<1}D_{1-x}.
$$
\begin{figure}[h!]
\begin{center}
\includegraphics[scale=1]{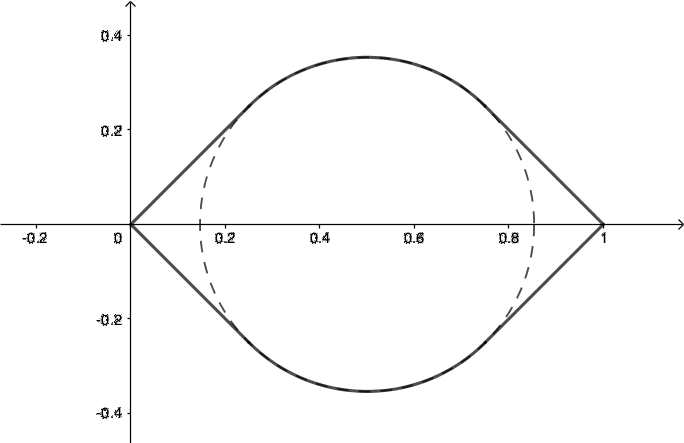}
\end{center}
\caption{The domain $\Omega$.\label{Fig1}}
\end{figure}

In this section we will see that even on the domain $\Omega$ which is strictly smaller than $D$ we cannot decide whether a function $f\in\mathcal{C}^{\infty}(0,1)$ extends to a function in $A^2(\Omega)$ solely based on its behavior on $(0,1)$ as reflected by an (AHS)-type formula (see the left hand side of \eqref{AHStypeSquare} below). Still, we have an upper bound:
\begin{prop}\label{Prop1}
For every function $f\in A^2(\D)$, we have
\begin{align}\label{AHStypeSquare}
\sum_{n=0}^{\infty}\frac{2^n}{(2n+1)!} &\int_0^{1/2}
 x^{2n+1}\left(|f^{(n)}(x)|^2+|f^{(n)}(1-x)|^2 \right)dx \nonumber\\
&\le 2 \int_\Omega |f(z)|^2dA(z) \le 2 \int_{D} |f(z)|^2dA(z).
\end{align}
\end{prop}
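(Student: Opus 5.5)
The plan is to repeat verbatim the Parseval computation from the proof of the (AHS)-formula, but only for centers $x\in(0,1/2)$ and, by symmetry, for centers $1-x$ with $x\in(0,1/2)$. (Here $f$ is a function in the Bergman space on the square $D$, so the statement should be read with $A^2(D)$; then $f$ is holomorphic on $\Omega\subset D$.)

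\textbf{Step 1: the disk $D_x$.} For fixed $x\in(0,1/2)$, set $g(u)=f(\frac{x}{\sqrt2}u+x)$. Exactly as in \eqref{MainFormula}, the identification of the weighted Bergman norm $A^2_{-1/2}$ on $\D$ with the coefficient sum gives
$$
\sum_{n\ge0}\frac{2^n}{(2n+1)!}x^{2n+1}|f^{(n)}(x)|^2=\frac{\sqrt2}{2\pi}\int_{D_x}|f(z)|^2\Big(\frac{x^2}{2}-|z-x|^2\Big)^{-1/2}dA(z).
$$
The disk $D_x=D(x,x/\sqrt2)$ lies in $\Omega$, and also in $D$ since $x\le 1/2$.

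\textbf{Step 2: the disk $D(1-x,x/\sqrt2)$.} The same identity holds at the center $1-x$ with the same radius $x/\sqrt2$. This disk is the reflection of $D_x$ under $z\mapsto 1-z$, and with $x'=1-x\in(1/2,1)$ it is exactly $D(x',(1-x')/\sqrt2)\subset\Omega$. Equivalently, one can apply Step 1 to $\tilde f(z)=f(1-z)$; note that $|\tilde f^{(n)}(x)|=|f^{(n)}(1-x)|$.

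\textbf{Step 3: integrate and apply Fubini–Tonelli.} Integrating the identity of Step 1 in $x$ over $(0,1/2)$ and exchanging the order (all terms are nonnegative), the first half of the left-hand side equals
$$
\frac{\sqrt2}{2\pi}\int_\Omega|f(z)|^2\int_0^{1/2}\chi_{D_x}(z)\,h(x)^{-1/2}\,dx\,dA(z).
$$
The inner integral is at most $\int_{x_1}^{x_2}h^{-1/2}\,dx=\sqrt2\pi$ by \eqref{IntChi}, since the integrand is nonnegative and we only restrict the range of integration. Hence the first half is bounded by $\int_\Omega|f|^2\,dA$. The same argument applied to $\tilde f$, whose relevant disks again lie in $\Omega$ (which is symmetric under $z\mapsto1-z$), bounds the second half by $\int_\Omega|f|^2\,dA$. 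Adding the two bounds gives the factor $2$.

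\textbf{Step 4: the last inequality.} It remains to see that $\Omega\subset D$. For $x\le1/2$, the distance from $x$ to the sides of the square through $0$ (the lines $y=\pm x$) is $x/\sqrt2$. The distance from $x$ to the other two sides is $(1-x)/\sqrt2\ge x/\sqrt2$. So each $D_x\subset D$, and symmetrically for $x>1/2$.

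The only delicate point is making sure the truncated $x$-integral over $(0,1/2)$ is still dominated by the full integral $\sqrt2\pi$; this is immediate from positivity. So I expect no real obstacle beyond the justification of Fubini, which is harmless for nonnegative integrands.
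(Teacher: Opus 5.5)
Your proposal is correct and follows the paper's proof essentially verbatim. The paper also uses the Parseval identity for $A^2_{-1/2}$ on each disk $D_x$, $x\in(0,1/2)$, integrates in $x$ with Fubini, bounds the truncated inner integral by the full value $\sqrt{2}\pi$ from \eqref{IntChi}, and handles the $|f^{(n)}(1-x)|^2$ part by the symmetry $z\mapsto 1-z$; your explicit check that $\Omega\subset D$ and your reading of the hypothesis as $f\in A^2(D)$ simply fill in details the paper leaves implicit.
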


\begin{proof}
Exactly as in \eqref{MainFormula}, in which we  replace integration on $(0,\infty)$ by integration on $(0,1/2)$, 
and observing that for every $x\in (0,1/2)$, $D_x\subset \Omega$, we get
\begin{eqnarray}
\lefteqn{\sum_{n=0}^{\infty}\frac{2^n}{(2n+1)!}\int_0^{1/2}
 x^{2n+1}|f^{(n)}(x)|^2dx}\nonumber\\
&&=\frac{\sqrt{2}}{2\pi}\int_0^{1/2}\int_{\Omega}\chi_{D_x}(z)|f(z)|^2 \left(\frac{x^2}{{2}}-|z-x|^2\right)^{-1/2}dA(z){dx}\nonumber\\
&&=\frac{\sqrt{2}}{2\pi}\int_{\Omega}|f(z)|^2\int_{0}^{1/2}\chi_{D_x}(z) \left(\frac{x^2}{{2}}-|z-x|^2\right)^{-1/2}{dx} dA(z)
\label{IdOmega}.
\end{eqnarray}

Note that we cannot integrate beyond $1/2$ since this would imply that the function is 
holomorphic on a domain containing points outside the square $D$.
Now, by \eqref{IntChi}, we have
$$
\int_{0}^{1/2}\chi_{D_x}(z) \left(\frac{x^2}{{2}}-|z-x|^2\right)^{-1/2}{dx}\le\int_{0}^{\infty}\chi_{D_x}(z) \left(\frac{x^2}{{2}}-|z-x|^2\right)^{-1/2}{dx}
=\sqrt{2}\pi.
$$

Using a symmetry argument we similarly obtain the estimate of the part $|f^{(n)}(1-x)|^2$ and thus the desired result.
\end{proof}

We will now show that the above proposition is optimal, and that the inequality cannot be reversed in general.
\begin{prop}
There exists a function $f\in \Hol(\Omega)$, such that $f\not\in A^2(\Omega)$ and
\begin{eqnarray*}
\sum_{n=0}^{\infty}\frac{2^n}{(2n+1)!} \int_0^{1/2}
 x^{2n+1}\left(|f^{(n)}(x)|^2+|f^{(n)}(1-x)|^2 \right)dx <\infty.
\end{eqnarray*}
\end{prop}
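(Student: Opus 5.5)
The plan is to exploit the identity \eqref{IdOmega}, which holds for every $f\in\Hol(\Omega)$ and not only for $f\in A^2(\Omega)$: the Parseval step on each disk $D_x$ and the exchange of integrals by Tonelli involve only nonnegative quantities. With its symmetric counterpart for $1-x$, it gives
$$
\sum_{n=0}^{\infty}\frac{2^n}{(2n+1)!} \int_0^{1/2} x^{2n+1}\left(|f^{(n)}(x)|^2+|f^{(n)}(1-x)|^2 \right)dx=\int_\Omega |f(z)|^2 w(z)\,dA(z),
$$
where
$$
w(z)=\frac{\sqrt{2}}{2\pi}\left(\int_{(x_1,x_2)\cap(0,1/2)}\frac{dx}{\sqrt{h(x)}}+\int_{(x_1',x_2')\cap(0,1/2)}\frac{dx}{\sqrt{h'(x)}}\right).
$$
Here $x_1,x_2$ are the roots from \eqref{2ndDeg}, $h$ is the function \eqref{fct.h}, and the primed quantities are the analogous objects for the reflected point $1-z$. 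By \eqref{IntChi} we know $w\le 2$ everywhere. The idea is that $w$ degenerates at certain boundary points of $\Omega$, so we can place a singularity of $f$ at such a point.

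\textbf{Locating the degenerate point.} Take $p=\frac12+\frac{i}{2\sqrt2}$, the top point of $D_{1/2}$. For $z=a+ib$ with $a=1/2$ and $b^2=1/8$, we get $x_1=2a-\sqrt{2(a^2-b^2)}=1/2$ and $x_2=3/2$. So as $z\to p$ inside $\Omega$, the admissible interval $(x_1,1/2)$ shrinks to a point. Using $h(x)=\frac12(x-x_1)(x_2-x)$ with $x_2-x$ bounded below near $p$, we get
$$
w(z)\lesssim \int_{x_1}^{1/2}\frac{dx}{\sqrt{x-x_1}}\simeq \sqrt{(1/2-x_1(z))_+},
$$
and by symmetry the same bound holds for the second term. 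Since $x_1$ is a smooth function of $z$ near $p$ (because $a^2-b^2$ stays near $1/8>0$) and $x_1(p)=1/2$, we get $1/2-x_1(z)\lesssim |z-p|$. Hence $w(z)\lesssim |z-p|^{1/2}$ in $\Omega$ near $p$.

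\textbf{The test function.} Let $f(z)=(z-p)^{-1}$. It is holomorphic on $\Omega$ because $p\notin\Omega$: the point $p$ lies on $\partial D_{1/2}$ and, by the root computation above, is not in any $D_x$ or $D_{1-x}$. Near $p$ we have $|f|^2w\lesssim |z-p|^{-3/2}$, which is integrable in the plane. Away from $p$, $f$ is bounded and $w\le 2$. So the (AHS)-type sum is finite. It is convenient to treat the conjugate point $\bar p$ likewise, but it is not needed, since $f$ is bounded near $\bar p$.

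\textbf{Divergence of the Bergman norm.} We need $\int_\Omega |z-p|^{-2}\,dA=\infty$. It suffices that $\Omega\cap D(p,r)$ contains a sector with vertex $p$ and positive opening angle. To see this, I would compute the envelope of the circles $\partial D_x$ for $x<1/2$ near $p$ and check that it meets $\partial D_{1/2}$ at $p$ transversally, or at least that the two boundary curves have distinct tangents, pointing into the lower half-plane. The symmetric family $D_{1-x}$ handles the mirror side. Integrating in polar coordinates over such a sector gives $\int_0^r \rho^{-2}\rho\, d\rho=\infty$.

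I expect the two geometric estimates near $p$ to be the main obstacle: the bound $1/2-x_1(z)\lesssim|z-p|$ together with the shape of $\Omega$ near $p$. If $\Omega$ turned out to have only a cusp at $p$, I would first try replacing the exponent $1$ by a suitable $\alpha\in(0,2)$, balancing the cusp width against the power of $|z-p|$ in the bound on $w$.
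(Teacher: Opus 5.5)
Your argument is correct and uses the same test function as the paper (your $p$ is the paper's $z_0$), but you prove finiteness by a different route. The one step you left open is easier than you expect, and the envelope computation you propose points in the wrong direction.

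The envelope of the circles $\partial D_x$ is the pair of lines $y=\pm x$. These touch $\partial D_{1/2}$ only at $1/4\pm i/4$, far from $p$. In fact, for $z=a+ib\in\Delta$ with $a\ge 1/4$, the midpoint $2a$ of $(x_1,x_2)$ is at least $1/2$. Hence $z\in\bigcup_{0<x<1/2}D_x$ iff $x_1<1/2<x_2$ iff $z\in D_{1/2}$. Together with the mirror family this gives $\Omega\cap\{1/4\le\Re z\le 3/4\}=D_{1/2}\cap\{1/4\le \Re z\le 3/4\}$. So near $p$ the boundary of $\Omega$ is just the circle $\partial D_{1/2}$, and there is no cusp. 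Moreover $D_{1/2}\subset\Omega$ contains the sector $\{p-\rho e^{i\theta}:0<\rho<1/2,\ \pi/4<\theta<3\pi/4\}$, over which $\int|z-p|^{-2}\,dA=\frac{\pi}{2}\int_0^{1/2}\rho^{-1}\,d\rho=\infty$. This is the paper's Stolz-angle argument, and your fallback with an exponent $\alpha$ is unnecessary.

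For finiteness, the paper works directly with $f^{(n)}(x)=n!/(z_0-x)^{n+1}$, the elementary bound $\frac{(x/\sqrt2)^2}{|x-z_0|^2}\le 2x$ on $[0,1/2]$, and \eqref{Asymp}, arriving at $\sum_n (n+1)^{-3/2}<\infty$. It never uses \eqref{IdOmega} for a function outside $A^2$. You instead read \eqref{IdOmega} as an identity in $[0,\infty]$ for arbitrary $f\in\Hol(\Omega)$. This is legitimate: the weighted Parseval identity on $\D$ holds for every holomorphic $g$ by integrating over the circles $|u|=r$, and all exchanges of sums and integrals are justified by Tonelli. You then show that the weight degenerates like $\sqrt{1/2-x_1(z)}\lesssim|z-p|^{1/2}$.

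The paper's route is shorter and completely explicit for this one function. Yours explains the mechanism: the weight $w$, which the proof of Proposition \ref{Prop2} bounds below on $\Omega_q$, tends to $0$ along the arcs of $\partial D_{1/2}$ lying in $\{1/4<\Re z<3/4\}$. It is also more flexible. The same argument shows that a branch of $(z-p)^{-\alpha}$ for any $1\le\alpha<5/4$, or a simple pole at any point of those arcs, also gives a counterexample.
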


\begin{proof}    
Let 
$$
 z_0=\frac{1}{2}+i\frac{1}{2\sqrt{2}}
$$
which is one of the two boundary points of $\Omega$ with real part $1/2$. 
Then clearly the function $f$ defined by
$$
 f(z)=\frac{1}{z_0-z}
$$
is not in $A^2(\Omega)$. This is a quite well-known fact (for instance, functions in $A^2(\Omega)$ grow like $o(1/\dist(z,\partial\Omega))$, which is not the case of $f$). Still, let us include a simple argument. Since $\Omega\cap D(z_0,1/4)$ contains a Stolz angle 
$$\Gamma_{z_0}=\{z:|\arg(z_0-z)-\pi/2|<\pi/2,|z-z_0|<1/4\},$$
we get
$$
 \int_{\Omega}|f(z)|^2dA(z)\ge \int_{-\pi/4}^{\pi/4}\int_0^{1/4}|f(z_0-re^{it})|^2 rdrdt
 =\frac{\pi}{2}\int_0^{1/4}\frac{1}{r}dr=\infty.
$$
Now
$$
 f^{(n)}(z)=\frac{n!}{(z_0-z)^{n+1}},\quad z\in\Omega.
$$
Recall from \eqref{Asymp}:
$$
\frac{2^{2n}\,(n!)^{2}}{(2n+1)!}= \frac{\Gamma(1+n)\,\Gamma\!\left(2-\frac{1}{2}\right)}{\Gamma\!\left(n+2-\frac{1}{2}\right)}
\simeq \frac{1}{\sqrt{n+1}}.
$$
Then
\begin{eqnarray*}
\sum_{n=0}^{\infty} \frac{2^n}{(2n+1)!}\int_0^{1/2}x^{2n+1}|f^{(n)}(x)|^2dx
 &=&\sum_{n=0}^{\infty} \frac{2^n}{(2n+1)!}
 \int_0^{1/2}x^{2n+1}\frac{(n!)^2}{|x-z_0|^{2(n+1)}}dx\\
&=&\sum_{n=0}^{\infty} \frac{2^n (n!)^2}{(2n+1)!}
  \int_0^{1/2}\frac{x^{2n+1}}{|x-z_0|^{2(n+1)}}dx\\
&\simeq&\sum_{n=0}^{\infty} \frac{1}{2^n\sqrt{n+1}}
  \int_0^{1/2}\frac{x^{2n+1}}{|x-z_0|^{2(n+1)}}dx\\
&\simeq&\sum_{n=0}^{\infty} \frac{1}{\sqrt{n+1}}
  \int_0^{1/2}\frac{(x/\sqrt{2})^{2n}}{|x-z_0|^{2n}}xdx\\
\end{eqnarray*}

It is easy to see that for all $x \in [0, \, 1/2]$, 
$$
 \frac{(x/\sqrt{2})^2}{|x-z_0|^2}
 =\frac{x^2}{2\Big((x-1/2)^2+1/8\Big)}
 =\frac{4x^2}{8x^2-8x+3}=2x\times \frac{2x}{8x^2-8x+3}
 \le 2x.
$$

Hence
$$
\int_0^{1/2}\frac{(x/\sqrt{2})^{2n}}{|x-z_0|^{2n}}xdx\le \int_0^{1/2}(2x)^ndx
=\frac{1}{2(n+1)},
$$
so that the above series converges. By symmetry, the same holds true for the series corres\-ponding to $|f^{(n)}(1-x)|^2$. This ends the proof.
\end{proof}

We next show that we can get a lower estimate on any smaller domain $\Omega_q$
of the following form.
$$
 \Omega_q=(\Omega\cap\{0<\Re z<1/4\})\cup (\Omega \cap \Ell_{q,0}),
$$
where $\Ell_{q,0}$ is an ellipse of the form (see Figure \ref{Fig2}) 
$$
 \Ell_{q,0}=\{z=x+iy:q(x-\frac{1}{2})^2+(16-q)y^2<1\},\quad q\in (0,8).
$$
Note that for $q=8$ this is exactly the disk $D(1/2,\frac{1}{2\sqrt{2}})$. 
Observe also that the ellipses
$\Ell_{q,0}$ fix the points $(1/4,\pm 1/4)$, $(3/4,\pm 1/4)$, 
and satisfy that $\Ell_{q,0}\cap\{z:1/4<\Re z<3/4\}$ is contained in $\Omega$. Moreover, $\bigcup_{0<q<8}\Omega_q=\Omega$.

\begin{figure}[h!]
\begin{center}
\includegraphics[scale=1]{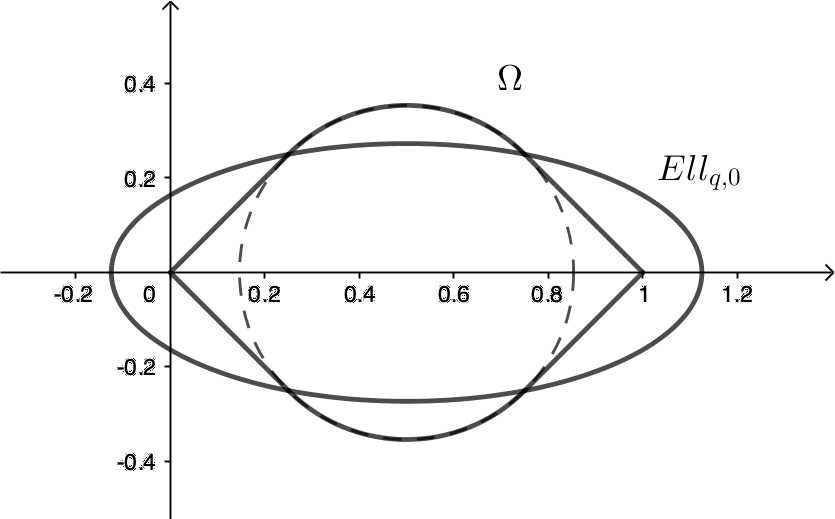}
\end{center}
\caption{The domain $\Omega$ and the ellipse $\Ell_{q,0}$.\label{Fig2}}
\end{figure}

\begin{prop}\label{Prop2}
For every $q\in (0,8)$ there is a constant $C_q$ such that each function $f\in\mathcal{C}^{\infty}(0,1)$ for which
\begin{eqnarray}\label{AHSintOmega}
\sum_{n=0}^{\infty}\frac{2^n}{(2n+1)!} \int_0^{1/2}
 x^{2n+1}\left(|f^{(n)}(x)|^2+|f^{(n)}(1-x)|^2 \right)dx <\infty
\end{eqnarray}
extends to a holomorphic function on $\Omega$, and we have
\begin{eqnarray}\label{AHSint}
\sum_{n=0}^{\infty}\frac{2^n}{(2n+1)!} \int_0^{1/2}
 x^{2n+1}\left(|f^{(n)}(x)|^2+|f^{(n)}(1-x)|^2 \right)dx 
\ge C_q \int_{\Omega_q} |f(z)|^2dA(z)
\end{eqnarray}
\end{prop}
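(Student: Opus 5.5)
The plan has two parts: first extend $f$ holomorphically to $\Omega$, then read off the lower bound from the weighted identity \eqref{IdOmega} by showing that the weight appearing there is bounded below on $\Omega_q$.

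\emph{Extension.} I would repeat the second half of the proof of the (AHS)-formula on each half-interval. Lemma \ref{RealAnal} is local: its proof only uses the integral over a small interval around $x_0$. So finiteness of \eqref{AHSintOmega} makes $f$ real analytic on $(0,1/2)$, and, applied to $x\mapsto f(1-x)$, also on $(1/2,1)$. Real analyticity at $1/2$ itself comes from the same Taylor-remainder argument, using a one-sided window of $x$ on either side of $1/2$.

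For almost every $x\in(0,1/2)$ the terms $\frac{2^n}{(2n+1)!}x^{2n+1}|f^{(n)}(x)|^2$ are summable. By \eqref{Asymp}, the Taylor series of $f$ at such an $x$ therefore converges on $D_x$. The same holds for a.e.\ $x\in(1/2,1)$ on $D_{1-x}$. Overlapping disks give coinciding expansions, so these local series glue to a single holomorphic function on $\Omega$.

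\emph{Weighted identity.} As in the proof of Proposition \ref{Prop1}, Fubini and Parseval give
$$
\text{LHS of \eqref{AHSint}}=\frac{\sqrt2}{2\pi}\int_\Omega |f(z)|^2\,\big(w(z)+w(1-\bar z)\big)\,dA(z),
\qquad
w(z)=\int_0^{1/2}\chi_{D_x}(z)\,h(x)^{-1/2}\,dx,
$$
where $h$ is as in \eqref{fct.h}. It then suffices to show $w(z)+w(1-\bar z)\ge c_q>0$ on $\Omega_q$. By the symmetry $z\mapsto 1-\bar z$, this reduces to $w\ge c_q$ on $\Omega_q\cap\{\Re z\le 1/2\}$.

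Write $z=a+ib$, $s=\sqrt{2(a^2-b^2)}$ and $x_{1,2}=2a\mp s$. Then
$$
w(z)=\sqrt2\int_{x_1}^{\min(x_2,1/2)}\frac{dx}{\sqrt{(x-x_1)(x_2-x)}} .
$$
After the affine change of variables used in \eqref{IntChi}, this equals $\sqrt2\cdot 2\arcsin\sqrt{t}$ when $t<1$, where
$$
t=\frac{1/2-x_1}{x_2-x_1}=\frac{1/2-2a+s}{2s}.
$$

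\emph{Case $a\le 1/4$.} Here $2a\le1/2$, so $t\ge 1/2$ and $w\ge\sqrt2\,\pi/2$. This handles $\Omega\cap\{0<\Re z<1/4\}$ with a constant independent of $q$.

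\emph{Case $1/4\le a\le 1/2$, $z\in \Ell_{q,0}$.} This is the main obstacle. One needs $t\ge\delta_q>0$, which is equivalent to
$$
s\ge \kappa_q\,(2a-1/2)\quad\text{for some }\kappa_q>1 .
$$
A uniform margin $1/2-x_1\ge\delta$ is not available: at the fixed points $(1/4,\pm1/4)$ one has $s=0$ and $x_1=x_2=1/2$, so both sides degenerate. One must work with the relative quantity $t$ instead.

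For the disk $q=8$, i.e.\ $D(1/2,1/(2\sqrt2))=D_{1/2}$, the condition is exactly $s>2a-1/2$. The claim is that for $q<8$ the ellipse lies inside $D_{1/2}$ with contact only at $(1/4,\pm1/4)$, and non-tangentially there, which gives the improved factor $\kappa_q$. To check this, I would square the inequality, $2(a^2-b^2)\ge \kappa^2(2a-1/2)^2$, and substitute the ellipse constraint $(16-q)b^2<1-q(a-1/2)^2$. This reduces matters to a one-variable quadratic inequality in $a\in[1/4,1/2]$. The inequality must hold with equality at $a=1/4$ and with the correct slope there, so this endpoint needs a first-order check. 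Choosing $\kappa_q$ close enough to $1$ should make it work for each fixed $q<8$.

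Granting this, $w\ge 2\sqrt2\arcsin\sqrt{\delta_q}$ on this part of $\Omega_q$. Combining the two cases and the symmetric half yields \eqref{AHSint} with $C_q$ proportional to $\min(\pi/2,\,2\arcsin\sqrt{\delta_q})$.
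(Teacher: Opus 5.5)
Your proposal is correct and is essentially the paper's argument: the identity \eqref{IdOmega}, the midpoint bound for $\Re z<1/4$, and, on the ellipse piece, a lower bound on the relative position of $1/2$ in $[x_1,x_2]$ reduced to $\partial\Ell_{q,0}$. The step you left as ``should work'' does close: with $u=a-1/4$, the ellipse boundary gives $2(a^2-b^2)=\frac{2(8-q)u+32u^2}{16-q}$, so $\kappa_q^2=\frac{24-2q}{16-q}>1$ works on $1/4\le a\le 1/2$. One small correction: for $q<8$ the ellipse is not contained in $D_{1/2}$, since its horizontal semi-axis $1/\sqrt q$ exceeds $1/(2\sqrt2)$ and it also meets the circle at $(3/4,\pm1/4)$; only its part over $1/4\le\Re z\le 3/4$ lies in $D_{1/2}$, but that is all you use.
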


\begin{proof}
As in Lemma \ref{RealAnal} we deduce from \eqref{AHSintOmega} that $f$ is real analytic (there is no problem at $x=1/2$ since we actually integrate on $(0,1)$ against the weight $\dist(x,\{0,1\})^{2n+1}$), and the arguments after that Lemma yield that $f$ extends holomorphically to $\Omega$.
\\

Recall now the identity \eqref{IdOmega} 
\begin{eqnarray*}
\sum_{n=0}^{\infty}\frac{2^n}{(2n+1)!}\int_0^{1/2}
 x^{2n+1}|f^{(n)}(x)|^2dx
=\frac{\sqrt{2}}{2\pi}\int_{\Omega}|f(z)|^2\int_{0}^{1/2}\chi_{D_x}(z) \left(\frac{x^2}{{2}}-|z-x|^2\right)^{-1/2}{dx} dA(z).
\end{eqnarray*}
It is thus enough to estimate from below on $\Omega_q$ the expression
$$
\int_{0}^{1/2}\chi_{D_x}(z) \left(\frac{x^2}{{2}}-|z-x|^2\right)^{-1/2}{dx}.
$$
Again $\chi_{D_x}(z)=1$ if and only if $x\in [x_1,x_2]$ and
$$
x_{1/2}=\frac{4\Re z\pm 2\sqrt{2(a^2-b^2)}}{2}=2a\pm \sqrt{2(a^2-b^2)}.
$$

For the lower bound we will first decompose $\Omega_q$ into two pieces $\Omega_1\cup \Omega_2$ where
$\Omega_1=\Omega_q\cap \{0<\Re z<1/2\}$ and $\Omega_2=\Omega
_q\setminus\Omega_1$.
By symmetry it is enough to discuss the estimate on $\Omega_1$.
We will decompose again $\Omega_1$ into two pieces 
$$
 \Omega_1=\Omega_{11}\cup\Omega_{12}
$$
where $\Omega_{11}=\Omega\cap\{0<\Re z <1/4\}$, $\Omega_{12}=\Omega_{q,0}\cap
\{1/4\le\Re z<1/2\}$.

\begin{figure}[h!]
\begin{center}
\includegraphics[scale=1]{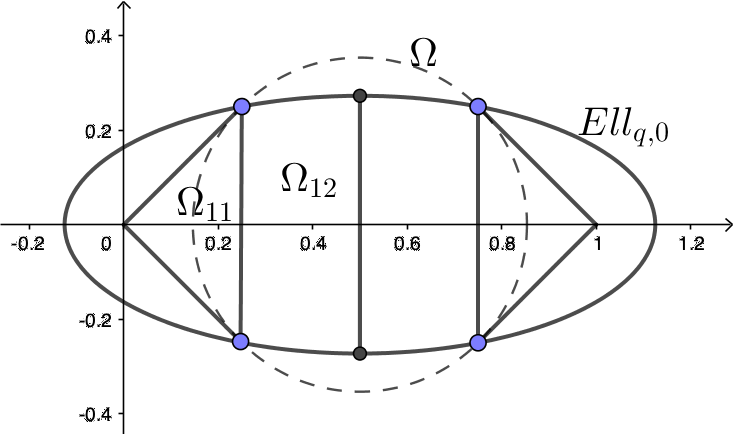}
\end{center}
\caption{The decomposition of $\Omega_q$.\label{Fig3}}
\end{figure}

Let us start with $z\in\Omega_{11}$ and hence $\Re z\in (0,1/4)$.
Then $x_1=2a-\sqrt{2(a^2-b^2)}<1/2$ and the midpoint of $[x_1,x_2]$ is $2a<1/2$, so that using \eqref{fct.h},
\begin{eqnarray*}
\int_{0}^{1/2}\chi_{D_x}(z) \left(\frac{x^2}{{2}}-|z-x|^2\right)^{-1/2}{dx}
&=&\sqrt{2}\int_{x_1}^{\min(x_2,1/2)}\frac{1}{\sqrt{x-x_1}}\frac{1}{\sqrt{x_2-x}}dx\\
&\ge& \sqrt{2}\int_{x_1}^{2a}\frac{1}{\sqrt{x-x_1}}\frac{1}{\sqrt{x_2-x}}dx\\
&=& \sqrt{2}\int_{-1}^0\frac{1}{\sqrt{1-y^2}}dy= \frac{\sqrt{2}}{2}\pi.
\end{eqnarray*}
Hence we have
\begin{eqnarray*}
\sum_{n=0}^{\infty}\frac{2^n}{(2n+1)!}\int_0^{1/2}
 x^{2n+1}|f^{(n)}(x)|^2dx
&=& \frac{\sqrt{2}}{2\pi}\int_0^{1/2} \int_{D_x}|f(z)|^2  \left(\frac{x^2}{{2}}-|z-x|^2\right)^{-1/2}dA(z){dx}\\
&=& \frac{\sqrt{2}}{2\pi}\int_{\Omega}|f(z)|^2\int_{0}^{1/2}\chi_{D_x}(z) \left(\frac{x^2}{{2}}-|z-x|^2\right)^{-1/2}{dx} dA(z)\\
&\ge& \frac{1}{2}\int_{\Omega_{11}}|f(z)|^2dA(z).
\end{eqnarray*}

We consider next $\Omega_{12}$. The idea is to show that when $z\in \Omega_{12}$,
the interval on which we integrate in 
\begin{eqnarray}\label{lower}
\int_{0}^{1/2}\chi_{D_x}(z) \left(\frac{x^2}{{2}}-|z-x|^2\right)^{-1/2}{dx}
\end{eqnarray}
is large enough to obtain the desired estimate.
Keeping in mind that $z\in D_x$ if and only if $x\in (x_1,x_2)$ ($x_1$ and
$x_2$ as introduced earlier), we have to ensure that $(x_1,x_2)$ meets $(0,1/2)$ on a
``sufficiently large'' interval. Note that for $\Re z\in (1/4,1/2)$, the
midpoint of $(x_1,x_2)$ is $2\Re z>1/2$, so outside the interval $(0,1/2)$. 
Still, when $z\in \Omega_1$, we have
$$
 x_1=2a-\sqrt{2(a^2-b^2)}<1/2,
$$
while $x_2=2a+\sqrt{2(a^2-b^2)}>1/2$ so that
$$
\int_{0}^{1/2}\chi_{D_x}(z) \left(\frac{x^2}{{2}}-|z-x|^2\right)^{-1/2}{dx}
=\int_{x_1}^{1/2} \left(\frac{x^2}{{2}}-|z-x|^2\right)^{-1/2}{dx}
$$
Using arguments already applied in Section 2 and the corresponding change of variables 
$$
 y(x)=2\frac{x-x_1}{x_2-x_1}-1,
$$ 
we get
\begin{eqnarray*}
\int_{x_1}^{1/2}\left(\frac{x^2}{{2}}-|z-x|^2\right)^{-1/2}{dx}
&=&\sqrt{2}\int_{x_1}^{1/2}\frac{1}{\sqrt{x-x_1}}\frac{1}{\sqrt{x_2-x}}dx\\
&=&\sqrt{2}\int_{-1}^{y(1/2)}\frac{1}{\sqrt{1-y^2}}dy
\end{eqnarray*}

The proof will be complete if we can show that $y(1/2)$ is uniformly bounded from below by a 
constant strictly greater than $-1$ when $z\in \Omega_{12}$, i.e. 
$$
 y(1/2)-(-1)=y(1/2)+1\ge \delta>0, \quad z\in\Omega_{12}.
$$
Since for fixed $a=\Re z\in
(1/4,1/2)$, the interval $(x_1,\frac{1}{2})$ is smallest when $b=\Im z$ is largest (this is also clear from the formula defining $x_1$), it is sufficient
to do so for $z=a+ib\in \partial\Ell_{q,0}$, i.e.
\begin{equation}\label{EllipseForm}
 q(a-\frac{1}{2})^2+(16-q)b^2=1.
\end{equation}
We have to show that 
$$
 y(1/2)+1=2\frac{1/2-x_1}{x_2-x_1}=
\frac{1/2-x_1}{\sqrt{2(a^2-b^2)}} \ge c >0.
$$
Observe that, since $0<q<8$, when $z\in \Omega_{12}$ is far from $1/4 \pm i/4$, then $z$ is far from the circle $C(1/2, 1/\sqrt{8})$, so $1/2-x_1\asymp1/8-(a-1/2)^2-b^2$ is bounded below. Therefore the only issue occurs when $z$ is simultaneously close to $1/4 \pm i/4$ and on $\partial\Ell_{q,0} \cap \partial \Omega_{12}$. So let $a=\frac{1}{4}+\delta$ with $\delta>0$ and 
\begin{equation}
\label{eq-b2}
    b^2=b(a)^2=\frac{1-q(a-1/2)^2}{16-q}.
\end{equation}
Then 
$$\frac12-x_1= -2\delta + \sqrt{2(a^2-b^2)}=:-2\delta + t,$$
so that 
$$y(1/2)+1= -2\frac{\delta}{t} + 1. $$
With \eqref{eq-b2} in mind, setting $P(a)=2(a^2-b(a)^2)$, it is easy to check that $P(1/4)=0$ and $P'(1/4)\neq 0$. Hence $P(a)=(a-1/4)R(a)$ with $R(1/4)>0$ (note that $a^2-b^2>0$). We conclude that 
$$y\left(\frac12\right)+1=-2\frac{\delta}{t} + 1 \sim -\frac{\delta}{\sqrt{\delta} \sqrt{R(1/4)}} + 1 \to 1 \quad \text{when } \delta \to 0.$$
\end{proof}

{\bf Remark:} Here is another observation. It is possible to replace 
$D_x$ by $$
D_x^a=\{z\in \C:|z-x|<r_x^a:=x\frac{a}{\sqrt{1+a^2}}\}
$$
for some $a\ge 1$ (note that $D_x=D_x^1$). This implies that $\bigcup_{x>0}D_x^a$ is the sector between the lines $y=-ax$ and $y=ax$, $x>0$. Define now
$$
\Omega^a=\bigcup_{0<x<1/2}D_x^a\cup \bigcup_{1/2<x<1}D_{1-x}^a.
$$
Figure \ref{Fig4} shows the relation between $\Omega$, $\Omega^a$ and $D$, and in particular $\Omega\subsetneqq \Omega^a$ for every $a>1$.
\begin{figure}[h!]
\begin{center}
\includegraphics[scale=1.5]{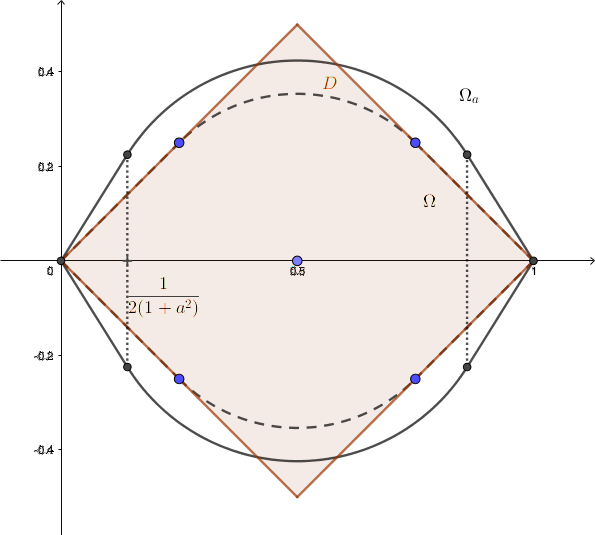}
\end{center}
\caption{The domains $\Omega$, $\Omega^a$ and $D$.\label{Fig4}}
\end{figure}
In the spirit of Proposition \ref{Prop1}, it is possible to establish estimates between a formula of type
$$\int_0^{1/2}\sum_{n=0}^{\infty}\left| \frac{(xa/\sqrt{1+a^2})^{n}f^{(n)}(x)}{n!} \right|^2
\frac{2^{2n}\,(n!)^{2}}{(2n+1)!}xdx
=\sum_{n=0}^{\infty}\frac{(2a/\sqrt{1+a^2})^{2n}}{(2n+1)!}\int_0^{1/2}
 x^{2n+1}|f^{(n)}(x)|^2dx,
$$
suitably symmetrized on $(0,1)$,
and the Bergman norm on $\Omega^a$. Here we do not want to enter into details of this discussion, but rather make the reader observe that the domain $\Omega_a$ will never cover $D$. Indeed, in order to reach for instance the upper corner $1/2+i/2$ of $D$, the disk centered at $1/2$ should have a radius $1/2$. However, $r_{1/2}^a=a/(2\sqrt{1+a^2})<1/2$ for every $a\ge 1$ so that the closure of $D_{1/2}^a$ never reaches $1/2+i/2$. As a result, even if we modify the weight of the integration in this specific way, we will not be able to recover the space $A^2(D)$ from the diagonal.

\section{Recovering the norm from two diagonals}

We are now in a position to prove our main result which we state here again. 
\begin{thm}
For every function $f\in A^2(D)$ we have
\begin{eqnarray}\label{MainEstim1}
 \int_D |f(z)|^2dA(z)&\simeq& 
\sum_{n=0}^{\infty}\frac{2^n}{(2n+1)!} \int_0^{1/2}
 x^{2n+1}\left(|f^{(n)}(x)|^2+|f^{(n)}(1-x)|^2+ \right. \nonumber\\
 & &\quad\left. |f^{(n)}(1/2+i(1/2-x))|^2+|f^{(n)}(1/2+i(x-1/2))|^2
\right)dx 
\end{eqnarray}
Moreover, if $f$ is a $\mathcal{C}^\infty$-function on $(0,1)\cup (1/2-i/2,1/2+i/2)$ which is holomorphic in some neighborhood of $1/2$ and for which the right hand side of \eqref{MainEstim1} is finite, then $f$ extends to $A^2(D)$ and we have \eqref{MainEstim1}.
\end{thm}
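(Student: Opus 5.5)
We prove the two inequalities in \eqref{MainEstim1} separately, by applying Propositions \ref{Prop1} and \ref{Prop2} to $f$ and to a rotated copy of $f$. Let $\rho(z)=1/2+i(z-1/2)$ be the rotation by $\pi/2$ about the centre $1/2$ of $D$. It maps $D$ onto itself and sends the horizontal diagonal $(0,1)$ onto the vertical diagonal $(1/2-i/2,1/2+i/2)$. Set $\tilde f=f\circ\rho$. Then $\tilde f^{(n)}(x)=i^n f^{(n)}(\rho(x))$, so $|\tilde f^{(n)}(x)|=|f^{(n)}(1/2+i(x-1/2))|$ and $|\tilde f^{(n)}(1-x)|=|f^{(n)}(1/2+i(1/2-x))|$. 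Hence the right hand side of \eqref{MainEstim1} is exactly the (AHS)-type expression \eqref{AHStypeSquare} for $f$ plus the same expression for $\tilde f$.

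\emph{Upper bound.} Let $f\in A^2(D)$. Proposition \ref{Prop1}, applied to $f$ and to $\tilde f\in A^2(D)$ (with $\|\tilde f\|_{A^2(D)}=\|f\|_{A^2(D)}$ since $\rho$ is an isometry), bounds the right hand side of \eqref{MainEstim1} by $4\int_D|f|^2\,dA$.

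\emph{Lower bound.} Let $\Omega'_q=\rho(\Omega_q)$. Proposition \ref{Prop2} gives
\[
\mathrm{RHS}\ge C_q\Big(\int_{\Omega_q}|f|^2\,dA+\int_{\Omega_q}|\tilde f|^2\,dA\Big)=C_q\int_{\Omega_q\cup\Omega'_q}|f|^2\,dA \cdot c,
\]
where the last step uses the change of variables $z\mapsto\rho(z)$ and $c\in\{1/2,1\}$ accounts for double counting on the overlap. It then suffices to choose $q\in(0,8)$ with $\Omega_q\cup\Omega'_q=D$, up to a null set.

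This covering is the main geometric step.
\begin{itemize}
\item Near the corners $0$ and $1$, $\Omega_q$ contains $\Omega\cap\{\Re z<1/4\}$ and its mirror image. These are the parts of the sector $\Delta$ and of $1-\Delta$ near the corners, up to $\Re z=1/4$, where the boundary of $\Omega$ is still the diagonal edges of $D$.
\item Near $\pm i/2+1/2$ the same is true for $\Omega'_q$.
\item What remains is the central region of $D$ away from the four corner triangles. It must lie in $\mathrm{Ell}_{q,0}\cup\rho(\mathrm{Ell}_{q,0})$ intersected with the relevant part of $\Omega$.
\end{itemize}
As $q\to 0$, the ellipse $\mathrm{Ell}_{q,0}$ degenerates to a horizontal strip $|y|<1/4$ through the fixed points $(1/4,\pm1/4)$ and $(3/4,\pm1/4)$. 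Its rotation degenerates to a vertical strip $|x-1/2|<1/4$. The union of these, together with the corner pieces, should cover $D$. The difficulty is that $\Omega_q$ only contains $\Omega\cap \mathrm{Ell}_{q,0}$, and $\Omega$ itself does not reach the corners $1/2\pm i/2$. So I would check carefully, by an explicit computation at the points $(1/4,\pm1/4)$ and along the boundary arcs, that each point of $D$ falls into one of the two pieces for some fixed small $q$. If the union misses a small region, I would instead use several values of $q$, or replace $\Omega_q$ by a slight modification, redoing the lower estimate $y(1/2)+1\ge\delta$ from the proof of Proposition \ref{Prop2} for that region.

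\emph{Extension statement.} Suppose $f$ is $\mathcal C^\infty$ on both diagonals, holomorphic near $1/2$, with the right hand side finite.
\begin{itemize}
\item By Proposition \ref{Prop2} (through Lemma \ref{RealAnal}), $f|_{(0,1)}$ extends holomorphically to some $F$ on $\Omega$.
\item Likewise, $\tilde f|_{(0,1)}$ extends to $\Omega$, so $f$ restricted to the vertical diagonal extends to some $G$ on $\rho(\Omega)$.
\item $F$ and $G$ both agree with the given holomorphic germ of $f$ near $1/2$, because that germ is determined by its restriction to either diagonal. Since $\Omega\cap\rho(\Omega)$ is connected (it is star-shaped about $1/2$), $F=G$ there, and they glue to a holomorphic function on $\Omega\cup\rho(\Omega)\supset D$.
\end{itemize}
The lower bound then gives $f\in A^2(D)$, and the upper bound completes \eqref{MainEstim1}.
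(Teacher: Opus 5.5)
Your proof follows the paper's route. Proposition~\ref{Prop1} applied to $f$ and $f\circ\rho$ gives the upper bound. Proposition~\ref{Prop2} on $\Omega_q$ and $\Omega'_q=\rho(\Omega_q)$ gives the lower bound. The extension is obtained by gluing through the germ at $1/2$; your remark that $\Omega\cap\rho(\Omega)$ is star-shaped about $1/2$, hence connected, makes the gluing more explicit than in the paper. Two points need fixing. First, the displayed identity with the factor $c\in\{1/2,1\}$ is not an equality in general. What you need, and what holds, is $\int_{\Omega_q}|f|^2\,dA+\int_{\Omega'_q}|f|^2\,dA\ge\int_{\Omega_q\cup\Omega'_q}|f|^2\,dA$.

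Second, you leave open the covering $D=\Omega_q\cup\Omega'_q$, which you yourself call the main geometric step. The paper only asserts it, but it is true for \emph{every} $q\in(0,8)$, so you need no limit $q\to0$, no several values of $q$, and no modification of $\Omega_q$. Read $\Omega_q$ as symmetric with respect to $\Re z=1/2$, as you do and as the proof of Proposition~\ref{Prop2} does; with the literal definition, points of $(0,1)$ near $1$ would be missed when $q>4$.

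\begin{itemize}
\item The outer pieces. For $z=a+ib\in D$ with $a<1/4$, the root $x_1=2a-\sqrt{2(a^2-b^2)}$ is smaller than $2a<1/2$, so $z\in D_x$ for some $x\in(x_1,1/2)$. Hence $\Omega\cap\{\Re z<1/4\}=D\cap\{\Re z<1/4\}$, and likewise on $\{\Re z>3/4\}$. Applying $\rho$ gives $\Omega'_q\supset D\cap\{|\Im z|>1/4\}$.
\item The central square. The rest of $D$ lies in the closed square $[1/4,3/4]\times[-1/4,1/4]$ minus its four corners, which lie on $\partial D$. At such a point $(x-\frac12)^2\le\frac1{16}$ and $y^2\le\frac1{16}$, with at least one inequality strict. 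Hence
\[
q\Big(x-\frac12\Big)^2+(16-q)y^2<\frac{q}{16}+\frac{16-q}{16}=1
\quad\text{and}\quad
\Big(x-\frac12\Big)^2+y^2<\frac18 .
\]
So the point lies in $\Ell_{q,0}\cap D(1/2,1/(2\sqrt2))\subset\Omega\cap\Ell_{q,0}\subset\Omega_q$.
\end{itemize}

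In particular, the corners $1/2\pm i/2$ that worried you are handled by $\Omega'_q$, not by the ellipse. With this check your argument is complete.
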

\begin{proof}
Since $D$ is invariant with respect to the rotation with center $z_0=1/2$ and angle $\pi/2$, it is clear that Proposition \ref{Prop1} also holds with respect to the second diagonal 
$(1/2-i/2,1/2+i/2)$. This shows that
\begin{eqnarray*}
\lefteqn{\sum_{n=0}^{\infty}\frac{2^n}{(2n+1)!} \int_0^{1/2}
 x^{2n+1}\left(|f^{(n)}(x)|^2+|f^{(n)}(1-x)|^2+ \right. }\\
 & &\quad\left. |f^{(n)}(1/2+i(1/2-x))|^2+|f^{(n)}(1/2+i(x-1/2))|^2
\right)dx \\
& &\lesssim  \int_D |f(z)|^2dA(z).
\end{eqnarray*}

\medskip

For the reverse estimate, we use Proposition \ref{Prop2}. Fix any $q\in (0,8)$.
Let $\Omega'_q$ be the rotation of $\Omega_q$ centered at $z_0$ and with angle 
$\pi/2$. Using Proposition \ref{Prop2} twice on both domains $\Omega_q$ and
$\Omega_q'$ we get
\begin{eqnarray*}
\sum_{n=0}^{\infty}\frac{2^n}{(2n+1)!} \int_0^{1/2}
 x^{2n+1}\left(|f^{(n)}(x)|^2+|f^{(n)}(1-x)|^2\right)dx 
\gtrsim  \int_{\Omega_q} |f(z)|^2dA(z).
\end{eqnarray*}
and
\begin{eqnarray*}
\sum_{n=0}^{\infty}\frac{2^n}{(2n+1)!} \int_0^{1/2}
 x^{2n+1}\left(|f^{(n)}(1/2+i(1/2-x))|^2+|f^{(n)}(1/2+i(x-1/2))|^2\right)dx \\
\gtrsim  \int_{\Omega'_q} |f(z)|^2dA(z).
\end{eqnarray*}
Since $D=\Omega_q\cup\Omega'_q$, the sum is bounded from below by
$$
 \int_{\Omega_q} |f(z)|^2dA(z)+\int_{\Omega'_q} |f(z)|^2dA(z)\ge \int_{D} |f(z)|^2dA(z),
$$
which shows the equivalence of norms.

Finally, if the right-hand side of \eqref{MainEstim1} is finite, $f$ extends analytically to both $\Omega_q$ and $\Omega_q'$ by Proposition \ref{Prop2}. Since it is assumed holomorphic in a neighborhood of $1/2$, both extensions coincide on $\Omega_q \cap \Omega_q'$, and hence $f$ extends holomorphically to the whole domain $D=\Omega_q \cup \Omega_q'$. 
\end{proof}

\bibliographystyle{alpha} 
\bibliography{biblio} 

@article {ACHK,
    AUTHOR = {Aadi, D. and Cruz, C. and Hartmann, A. and Kellay, K.},
     TITLE = {Multiple sampling and interpolation in {B}ergman spaces},
   JOURNAL = {J. Funct. Anal.},
  FJOURNAL = {Journal of Functional Analysis},
    VOLUME = {284},
      YEAR = {2023},
    NUMBER = {9},
     PAGES = {Paper No. 109865, 52},
      ISSN = {0022-1236},
   MRCLASS = {30J99 (30H20 46E22 47B32)},
  MRNUMBER = {4549582},
       DOI = {10.1016/j.jfa.2023.109865},
       URL = {https://doi.org/10.1016/j.jfa.2023.109865},
}

@article{AHS,
	Author = {Aikawa, A. AND Hayashi, N. AND Saitoh, S.},
	Journal = {Complex Variables Theory Appl.},
	Volume = {15},
	Pages = {pp. 27-36},
	Title = {{T}he {B}ergman space on a sector and the heat equation},
	Year = {1990}}

@article {BHKM,
    AUTHOR = {Borichev, A. and Hartmann, A. and Kellay, K. and Massaneda, X.},
     TITLE = {Geometric conditions for multiple sampling and interpolation in the {F}ock space},
   JOURNAL = {Adv. Math.},
  FJOURNAL = {Advances in Mathematics},
    VOLUME = {304},
      YEAR = {2017},
     PAGES = {1262--1295},
      ISSN = {0001-8708},
   MRCLASS = {30H20 (30E05 46C07 46E22 47B32 47B35)},
  MRNUMBER = {3558232},
MRREVIEWER = {Sei-ichiro Ueki},
       DOI = {10.1016/j.aim.2016.09.019},
       URL = {https://doi.org/10.1016/j.aim.2016.09.019},
}

@article {DE,
    AUTHOR = {Dard\'e, J. and Ervedoza, S.},
     TITLE = {On the reachable set for the one-dimensional heat equation},
   JOURNAL = {SIAM J. Control Optim.},
  FJOURNAL = {SIAM Journal on Control and Optimization},
    VOLUME = {56},
      YEAR = {2018},
    NUMBER = {3},
     PAGES = {1692--1715},
      ISSN = {0363-0129,1095-7138},
   MRCLASS = {93B03 (35K05 35K20 93C20)},
  MRNUMBER = {3802267},
MRREVIEWER = {Larissa\ V.\ Fardigola},
       DOI = {10.1137/16M1093215},
       URL = {https://doi-org.acces.bibl.ulaval.ca/10.1137/16M1093215},
}

@article {FR,
    AUTHOR = {Fattorini, H. O. and Russell, D. L.},
     TITLE = {Exact controllability theorems for linear parabolic equations in one space dimension},
   JOURNAL = {Arch. Rational Mech. Anal.},
  FJOURNAL = {Archive for Rational Mechanics and Analysis},
    VOLUME = {43},
      YEAR = {1971},
     PAGES = {272--292},
      ISSN = {0003-9527},
   MRCLASS = {93B05},
  MRNUMBER = {335014},
MRREVIEWER = {F.\ M.\ Kirillova},
       DOI = {10.1007/BF00250466},
       URL = {https://doi-org.acces.bibl.ulaval.ca/10.1007/BF00250466},
}

@incollection {FHR,
    AUTHOR = {Fricain, E. and Hartmann, A. and Ross, W. T.},
     TITLE = {A survey on reverse {C}arleson measures},
 BOOKTITLE = {Harmonic analysis, function theory, operator theory, and their
              applications},
    SERIES = {Theta Ser. Adv. Math.},
    VOLUME = {19},
     PAGES = {91--123},
 PUBLISHER = {Theta, Bucharest},
      YEAR = {2017},
   MRCLASS = {30J05 (30H10 30H20 46E22)},
  MRNUMBER = {3753895},
MRREVIEWER = {E. S. Dubtsov},
}

@book {HKZ,
    AUTHOR = {Hedenmalm, H. and Korenblum, B. and Zhu, K.},
     TITLE = {Theory of {B}ergman spaces},
    SERIES = {Graduate Texts in Mathematics},
    VOLUME = {199},
 PUBLISHER = {Springer-Verlag, New York},
      YEAR = {2000},
     PAGES = {x+286},
      ISBN = {0-387-98791-6},
   MRCLASS = {46E15 (30-02 30H05 46J15 47B38)},
  MRNUMBER = {1758653},
MRREVIEWER = {D. Sarason},
       DOI = {10.1007/978-1-4612-0497-8},
       URL = {https://doi.org/10.1007/978-1-4612-0497-8},
}

@article {HKT,
    AUTHOR = {Hartmann, A. and Kellay, K. and Tucsnak, M.},
     TITLE = {From the reachable space of the heat equation to {H}ilbert
              spaces of holomorphic functions},
   JOURNAL = {J. Eur. Math. Soc. (JEMS)},
  FJOURNAL = {Journal of the European Mathematical Society (JEMS)},
    VOLUME = {22},
      YEAR = {2020},
    NUMBER = {10},
     PAGES = {3417--3440},
      ISSN = {1435-9855},
   MRCLASS = {93B05 (30H10 30H15 30H20 35K05)},
  MRNUMBER = {4153111},
MRREVIEWER = {Mohammed El A\"{\i}di, Universidad Nacional de Colombia},
       DOI = {10.4171/jems/989},
       URL = {https://doi.org/10.4171/jems/989},
}

@article {HO,
    AUTHOR = {Hartmann, A. and Orsoni, M.-A.},
     TITLE = {Separation of singularities for the {B}ergman space and
              application to control theory},
   JOURNAL = {J. Math. Pures Appl. (9)},
  FJOURNAL = {Journal de Math\'{e}matiques Pures et Appliqu\'{e}es. Neuvi\`eme S\'{e}rie},
    VOLUME = {150},
      YEAR = {2021},
     PAGES = {181--201},
      ISSN = {0021-7824},
   MRCLASS = {30H20 (35K05 46E15 93B03)},
  MRNUMBER = {4248466},
MRREVIEWER = {Serhii V. Gryshchuk},
       DOI = {10.1016/j.matpur.2021.04.009},
       URL = {https://doi.org/10.1016/j.matpur.2021.04.009},
}

@article {KNT,
    AUTHOR = {Kellay, K. and Normand, T. and Tucsnak, M.},
     TITLE = {Sharp reachability results for the heat equation in one space dimension},
   JOURNAL = {Anal. PDE},
  FJOURNAL = {Analysis \& PDE},
    VOLUME = {15},
      YEAR = {2022},
    NUMBER = {4},
     PAGES = {891--920},
      ISSN = {2157-5045},
   MRCLASS = {93B03 (30H20 35K08 93B05 93C20)},
  MRNUMBER = {4478293},
MRREVIEWER = {Michela Egidi},
       DOI = {10.2140/apde.2022.15.891},
       URL = {https://doi.org/10.2140/apde.2022.15.891},
}

@article {Lu,
    AUTHOR = {Luecking, D. H.},
     TITLE = {Forward and reverse {C}arleson inequalities for functions in
              {B}ergman spaces and their derivatives},
   JOURNAL = {Amer. J. Math.},
  FJOURNAL = {American Journal of Mathematics},
    VOLUME = {107},
      YEAR = {1985},
    NUMBER = {1},
     PAGES = {85--111},
      ISSN = {0002-9327},
   MRCLASS = {30A10 (30E05 46E15)},
  MRNUMBER = {778090},
MRREVIEWER = {Charles Horowitz},
       DOI = {10.2307/2374458},
       URL = {https://doi.org/10.2307/2374458},
}

@article {MRR,
    AUTHOR = {Martin, P. and Rosier, L. and Rouchon, P.},
     TITLE = {Null controllability of one-dimensional parabolic equations by the flatness approach},
   JOURNAL = {SIAM J. Control Optim.},
  FJOURNAL = {SIAM Journal on Control and Optimization},
    VOLUME = {54},
      YEAR = {2016},
    NUMBER = {1},
     PAGES = {198--220},
      ISSN = {0363-0129,1095-7138},
   MRCLASS = {93B05 (35K20 35K65 93C20)},
  MRNUMBER = {3456387},
MRREVIEWER = {Larissa\ V.\ Fardigola},
       DOI = {10.1137/14099245X},
       URL = {https://doi-org.acces.bibl.ulaval.ca/10.1137/14099245X},
}

@article {O,
    AUTHOR = {Orsoni, M.-A.},
     TITLE = {Reachable states and holomorphic function spaces for the 1-{D}
              heat equation},
   JOURNAL = {J. Funct. Anal.},
  FJOURNAL = {Journal of Functional Analysis},
    VOLUME = {280},
      YEAR = {2021},
    NUMBER = {7},
     PAGES = {Paper No. 108852, 17},
      ISSN = {0022-1236},
   MRCLASS = {93B03 (30H20 35K05 44A10)},
  MRNUMBER = {4211029},
MRREVIEWER = {Nino Manjavidze},
       DOI = {10.1016/j.jfa.2020.108852},
       URL = {https://doi.org/10.1016/j.jfa.2020.108852},
}

@book {Seip,
    AUTHOR = {Seip, K.},
     TITLE = {Interpolation and sampling in spaces of analytic functions},
    SERIES = {University Lecture Series},
    VOLUME = {33},
 PUBLISHER = {American Mathematical Society, Providence, RI},
      YEAR = {2004},
     PAGES = {xii+139},
      ISBN = {0-8218-3554-8},
   MRCLASS = {30E05 (30D45 30D55 46E15 46E20 47B35 94A20)},
  MRNUMBER = {2040080},
MRREVIEWER = {Richard Rochberg},
       DOI = {10.1090/ulect/033},
       URL = {https://doi.org/10.1090/ulect/033},
}

@article {SW,
    AUTHOR = {Seip, K. and Wallst\'{e}n, R.},
     TITLE = {Density theorems for sampling and interpolation in the
              {B}argmann-{F}ock space. {II}},
   JOURNAL = {J. Reine Angew. Math.},
  FJOURNAL = {Journal f\"{u}r die Reine und Angewandte Mathematik. [Crelle's
              Journal]},
    VOLUME = {429},
      YEAR = {1992},
     PAGES = {107--113},
      ISSN = {0075-4102},
   MRCLASS = {46E20 (30E05 30H05 46E22)},
  MRNUMBER = {1173118},
MRREVIEWER = {Richard Rochberg},
       DOI = {10.1515/crll.1992.429.107},
       URL = {https://doi.org/10.1515/crll.1992.429.107},
}

@article {SeI,
    AUTHOR = {Seip, K.},
     TITLE = {Density theorems for sampling and interpolation in the
              {B}argmann-{F}ock space. {I}},
   JOURNAL = {J. Reine Angew. Math.},
  FJOURNAL = {Journal f\"{u}r die Reine und Angewandte Mathematik. [Crelle's
              Journal]},
    VOLUME = {429},
      YEAR = {1992},
     PAGES = {91--106},
      ISSN = {0075-4102},
   MRCLASS = {46E20 (30E05 30H05 46E22)},
  MRNUMBER = {1173117},
MRREVIEWER = {Richard Rochberg},
       DOI = {10.1515/crll.1992.429.91},
       URL = {https://doi.org/10.1515/crll.1992.429.91},
}

\end{document}